\documentclass[11pt]{amsart}
\usepackage{srcltx} 
\usepackage{latexsym} 
\usepackage[titletoc]{appendix}
\usepackage{lineno}
\usepackage{bm}
\usepackage{enumerate}
\usepackage{esint}
\usepackage{mathtools}
\usepackage[plainpages=true,pdfpagelabels,hypertexnames=true,colorlinks=true,pdfstartview=FitV,linkcolor=black,citecolor=black,urlcolor=black]{hyperref}
\usepackage[ddmmyyyy]{datetime}
\usepackage{etoolbox}
\usepackage{twoopt}
\usepackage{color}
\usepackage{amsfonts,amssymb,amscd,amsmath}
\usepackage{xparse}

\DeclareMathOperator{\dv}{div}
\DeclareMathOperator{\capacity}{Cap}
\DeclareMathOperator{\mm}{M}
\DeclareMathOperator{\ee}{\mathbb{E}}

\newcommand{\de}{\delta}
\newcommand{\ep}{\varepsilon}
\newcommand{\ga}{\gamma}

\newcommand{\Om}{\Omega}

\newcommand{\RR}{\mathbb{R}}
\newcommand{\itl}[1][\Om]{\int_{#1}}
\def\aa{\mathcal{A}}

\newcommand{\V}{W_0^{1,p}(\Om)}

\newcommand{\axgrad}[1]{\mathcal{A}(x,\ifblank{#1}{\nabla u\:}{#1})}
\newcommand{\dx}[1][x]{\, d#1} 
\newcommand{\cpt}[1][{1,p}]{\capacity_{#1}}
\newcommand{\trm}[1]{\quad \textrm{#1}\quad }

\newcommand{\mps}[1][\Om]{\mm^{1,p}(#1)}

\newcommand{\wpo}[1][\Om]{W_0^{1,p}(#1)}

\newcommand{\abs}[1]{\lvert #1 \rvert}
\newcommand{\expt}[1][\abs{T_j (u_n) }]{e^{\de #1}}
\newcommand{\exptk}[1][\abs{T_j (u_k) }]{e^{\mu_0 #1}}

\def\gbmo{$(\ga, \, R_0)$-BMO }
\def\gflt{$(\ga, \, R_0)$-Reifenberg flat }

\def\dv{\mathop{\rm div}}

\def\bea{\begin{equation}\begin{aligned}}
\def\ena{\end{aligned}\end{equation}}

\def\beas{\begin{equation*}\begin{aligned}}
\def\enas{\end{aligned}\end{equation*}}

\def\integral{\int}
\def\fintegral{\fint}

\def\nc{\newcommand}

\nc\m[1]{\left| #1\right|}
\nc\norm[1]{\left\|#1\right\|}
\hyphenation{super-harmon-ic}

\NewDocumentCommand{\ceil}{s O{} m}{%
  \IfBooleanTF{#1} 
    {\left\lceil#3\right\rceil} 
    {#2\lceil#3#2\rceil} 
}


\newtheorem{theorem}{Theorem}[section]
\newtheorem{lemma}[theorem]{Lemma}

\newtheorem{definition}[theorem]{Definition}



\newtheorem{hypT}[theorem]{Hypothesis}

\newtheorem{assmT}[theorem]{Assumption}

\newtheorem{remark}[theorem]{Remark}        
\numberwithin{equation}{section}


\setlength{\topmargin}{-0.1in}
\setlength{\oddsidemargin}{0.25in}
\setlength{\evensidemargin}{0.25in}
\setlength{\textwidth}{6.0in}
\setlength{\rightmargin}{0.7in}
\setlength{\leftmargin}{-0.5in}
\setlength{\textheight}{8.8in}
\headsep=0.25 in

\begin{document}
\title[Quasilinear equations with natural growth and Sobolev multipliers]{Quasilinear equations with natural growth in the gradients in spaces of Sobolev multipliers}

\author[Karthik Adimurthi]
{Karthik Adimurthi$^{1}$}
\address{${}^1$ Department of Mathematical Sciences, 
Seoul National University, GwanAkRo 1, Gwanak-Gu, 
Seoul 08826, 
South Korea.}
\email{kadimurthi@snu.ac.kr \and karthikaditi@gmail.com}

\author[Nguyen Cong Phuc]
{Nguyen Cong Phuc$^{2}$}
\address{${}^2$ Department of Mathematics,
Louisiana State University,
303 Lockett Hall, Baton Rouge, LA 70803, USA.}
\email{pcnguyen@math.lsu.edu}

\thanks{$^{1}$ Supported in part by National Research Foundation of Korea grant funded by the Korean government (MEST) (NRF-2015R1A2A1A15053024)}
\thanks{$^{2}$ Supported in part by Simons Foundation, award number 426071}

\begin{abstract}We study the  existence problem for a class of nonlinear elliptic equations whose prototype is of the form  $-\Delta_p u = |\nabla u|^p + \sigma$ in a bounded domain $\Om\subset \RR^n$. Here $\Delta_p$, $p>1$, is the standard $p$-Laplacian operator defined by $\Delta_p u={\rm div}\, (|\nabla u|^{p-2}\nabla u)$, and 
the datum $\sigma$ is a signed distribution in $\Om$.  The class of solutions that we are interested in consists  of functions $u\in W^{1,p}_0(\Om)$ such that 
$|\nabla u|\in M(W^{1,p}(\Om)\rightarrow L^p(\Om))$, a space pointwise Sobolev  multipliers  consisting of functions $f\in L^{p}(\Om)$ such that   
\begin{equation*}
 \int_{\Om} |f|^{p} |\varphi|^p dx \leq C \int_{\Om} (|\nabla \varphi|^p + |\varphi|^p)  dx \quad \forall \varphi\in C^\infty(\Om),
\end{equation*}
for some $C>0$.   This is a natural class of solutions at least when  the distribution $\sigma$ is nonnegative and compactly supported in $\Om$. We show essentially  that, 
 with only a gap in the smallness constants, the above equation has a solution in this class if and only if one can write $\sigma={\rm div}\, F$ for a vector field $F$ such
that $|F|^{\frac{1}{p-1}}\in M(W^{1,p}(\Om)\rightarrow L^p(\Om))$.

As an important application, via the exponential transformation $u\mapsto v=e^{\frac{u}{p-1}}$, we obtain an existence result for  the quasilinear  equation of Schr\"odinger type $-\Delta_p v = \sigma\, v^{p-1}$, $v\geq 0$ in $\Om$, and $v=1$ on $\partial\Om$, which is interesting in its own right.

\end{abstract}
	
\maketitle

\section{Introduction}\label{Sec1}
In this work, we study the existence problem for the quasilinear elliptic  equation
\begin{equation}
 \label{basic_pde2}
\left\{ \begin{array}{ll}
-\dv \aa(x, \nabla u) = \mathcal{B}(x, u, \nabla u) + \sigma & \trm{in} \Omega, \\
u = 0 & \trm{on} \partial \Omega, 
\end{array}
\right.
\end{equation}
in a bounded domain $\Omega \subset \RR^n$, where the principal operator $\dv \aa(x, u, \nabla u)$ is a Leray-Lions operator defined on $W_0^{1,p}(\Om)$ and $|\mathcal{B}(x, u, \nabla u)| \lesssim |\nabla u|^p$, $p>1$.
Precise  assumptions on the domain $\Om$ and the nonlinearities $\aa$, $B$ will be made explicitly later.  Here  the `datum' $\sigma$ is  a general  distribution in $\Om$, and
$\wpo$ is defined as the completion of $C_c^\infty(\Om)$ under the semi-norm $\norm{\nabla (\cdot)}_{L^{p}(\Om)}$.

 A typical example of \eqref{basic_pde2} after which it is modeled is the following quasilinear elliptic  equations with gradient nonlinearity of natural growth  of the form
\begin{equation} \label{basic_pde}
-\Delta_p u = |\nabla u|^p + \sigma \text{ in } \Omega,  \qquad u = 0 \text{ on } \partial \Omega, 
\end{equation}
where  $\Delta_p u:= \dv (|\nabla u|^{p-2} \nabla u)$, $p>1$, is the $p$-Laplacian operator.

When $p=2$, equation \eqref{basic_pde} becomes a  stationary  viscous Hamilton-Jacobi equation, also known as the Kardar-Parisi-Zhang equation that appears  in the physical theory  of growth and roughening of surfaces  \cite{KPZ, KS}. Moreover, via the transformation $u\mapsto v:=e^{\frac{u}{p-1}}$, this equation can be transformed into the Schr\"odinger type equation 
\begin{equation*}
-\Delta_p v =   (p-1)^{1-p}\sigma\, v^{p-1} \text{ in } \Omega, \qquad v \geq 0  \text{ in } \Omega, \qquad v = 1   \text{ on } \partial \Omega, 
\end{equation*}
a connection that we shall discuss at the end of this section.

When it comes to the existence theory,   it is well-known that in order for \eqref{basic_pde} to have a solution the datum $\sigma$ must be both small and regular enough. For example, if  $\sigma$ is a nonnegative  locally finite measure in $\Om$ and  the first equation in \eqref{basic_pde}  has a $W^{1,p}_{\rm loc}(\Om)$ solution (without any boundary condition), then $\sigma$ must obey the weighted Poincar\'e-Sobolev inequality (see \cite{HMV, JMV1, JMV2}):
\begin{equation}\label{possig}
\int_{\Om} |\varphi|^p d\sigma \leq  (p-1)^{p-1} \int_\Om |\nabla \varphi|^p dx \quad \forall \varphi\in C_c^\infty(\Omega).
\end{equation}

Moreover, when $\sigma\geq 0$  the nonlinear term  $|\nabla u|^p$ also obeys a similar weighted inequality
\begin{equation}\label{nablau-cond}
\int_{\Om} |\varphi|^p |\nabla u|^pdx  \leq p^p \int_\Om |\nabla \varphi|^p dx \quad \forall \varphi\in C_c^\infty(\Omega).
\end{equation}

If we assume in addition that ${\rm supp}(\sigma)=K$ where $K$ is a compact set in $\Om$, then by multiplying by a  cutoff function $\chi\in C_c^\infty(\Om)$, $0\leq \chi\leq 1$, and $\chi=1$ on $K$, we  see from 
\eqref{possig} that 
\begin{equation}\label{possig-Rn}
\int_{\Om} |\varphi|^p d\sigma \leq  \lambda \int_{\RR^n} (|\nabla \varphi|^p +|\varphi|^p) dx \quad \forall \varphi\in C_c^\infty(\RR^n),
\end{equation}
with a constant $\lambda>0$. \emph{Note that the `test functions' $\varphi$ in \eqref{possig-Rn} are now allowed to have support not contained in $\Om$. However,  in general from 
\eqref{nablau-cond} we cannot say that $|\nabla u|^p$ obeys the similar inequality
\begin{equation}\label{nablau-cond-Rn}
\int_{\Om} |\varphi|^p |\nabla u|^pdx  \leq A \int_{\RR^n} (|\nabla \varphi|^p + |\varphi|^p)  dx \quad \forall \varphi\in C_c^\infty(\RR^n),
\end{equation}
for some $A>0$, not even when $u\in W^{1,p}_0(\Om)$.} The main difference between  \eqref{nablau-cond} and \eqref{nablau-cond-Rn}
lies in the behavior of $|\nabla u|^p$ near the boundary of $\Om$. \emph{In contrast to \eqref{nablau-cond}, inequality \eqref{nablau-cond-Rn} requires that $|\nabla u|^p$ have stronger regularity up to  the boundary of $\Om$.}

In this paper, we only insist on obtaining solutions to  \eqref{basic_pde2} that belong to the class $\mathcal{C}$ of functions 
$u\in W^{1,p}_0(\Om)$ such that inequality \eqref{nablau-cond-Rn} holds with some $A>0$. Our goal is to find the largest 
space $\mathcal{F}$ of data on $\Om$ so that whenever $\sigma\in \mathcal{F}$ with a sufficiently small 
norm then    \eqref{basic_pde2} has a solution in $\mathcal{C}$. In brief, our main result states  that, with only a gap in the smallness constants, equation
\eqref{basic_pde2} has a solution in the class $\mathcal{C}$ if and only if the distribution $\sigma$ can be written 
in the form $\sigma={\rm div}\, F$ for a vector field $F\in L^{\frac{p}{p-1}}(\Om,\RR^n)$ such that 
\begin{equation}\label{Vec-cond}
 \int_{\Om} |F|^{\frac{p}{p-1}} |\varphi|^p dx \leq \lambda \int_{\RR^n} (|\nabla \varphi|^p + |\varphi|^p)  dx \quad \forall \varphi\in C_c^\infty(\RR^n),
\end{equation}
for some $\lambda>0$. 
For the simpler equation \eqref{basic_pde} on, say, $C^1$ domains our results read as follows.

\begin{theorem}\label{weakzero} {\rm (i)}  Suppose that \eqref{basic_pde} has a solution in $u\in W^{1,p}_0(\Om)$ such that 
\eqref{nablau-cond-Rn} holds for some $A>0$ then  it is necessary that  $\sigma= {\rm div}\, F$
for a vector field $F\in L^{\frac{p}{p-1}}(\Om,\RR^n)$ such that \eqref{Vec-cond} holds with a $\lambda>0$. 

\noindent {\rm (ii)} Conversely, suppose that $\Om$ is a bounded $C^1$ domain. Then  there 
exists a constant $\lambda_0=\lambda_0(n,p,\Om)>0$ such that if $\sigma={\rm div}\, F$ for a vector field $F$ satisfying \eqref{Vec-cond}  with some   $\lambda\in (0, \lambda_0]$, then  \eqref{basic_pde} has a  solution  $u\in W^{1,p}_{0}(\Om)$ satisfying the weighted inequality \eqref{nablau-cond-Rn} for some $A>0$. 
\end{theorem}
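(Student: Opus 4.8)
The plan is to treat the two implications separately, since they are genuinely different in character. For part (i), the necessity direction, I would start from a solution $u\in\wpo[\Om]$ of \eqref{basic_pde} satisfying \eqref{nablau-cond-Rn}, and simply \emph{rearrange} the equation. Writing the $p$-Laplacian term as $\dv(\abs{\nabla u}^{p-2}\nabla u)$, the equation says $\sigma = -\dv(\abs{\nabla u}^{p-2}\nabla u) - \abs{\nabla u}^p$; but $\abs{\nabla u}^p$ is not directly of divergence form, so the natural move is to absorb the gradient nonlinearity by testing against $u$ itself (or a truncation of $u$) and instead look for the field $F$ in the form $F = -\abs{\nabla u}^{p-2}\nabla u + G$, where $G$ is chosen so that $\dv G = \abs{\nabla u}^p$ in $\Om$. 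Solving $\dv G = \abs{\nabla u}^p$ with zero boundary data (e.g.\ $G = \nabla w$ where $-\Delta w$ or a suitable linear auxiliary problem has right-hand side $\abs{\nabla u}^p$, or more cheaply $G = \abs{\nabla u}^p \cdot (\text{a fixed bounded vector field with divergence }1$, localized) is possible on a bounded domain, and then one estimates $\abs{F}^{\frac{p}{p-1}}\lesssim \abs{\nabla u}^p + \abs{G}^{\frac{p}{p-1}}$. The first term obeys \eqref{Vec-cond} by hypothesis \eqref{nablau-cond-Rn}; for the second term one needs the solution operator for $\dv G = h$ to map the multiplier space $M(W^{1,p}\to L^p)$-type bounds on $h^{1/p}$ into the corresponding bounds on $\abs{G}^{1/(p-1)}$, which is a capacitary/Sobolev-multiplier estimate for the Newtonian (or Riesz) potential. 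Membership $F\in L^{\frac{p}{p-1}}(\Om,\RR^n)$ follows since $\nabla u\in L^p$ and the auxiliary problem is solvable in the right space.

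For part (ii), the sufficiency direction on a $C^1$ domain, the approach is the standard approximation-plus-a-priori-estimate scheme for equations with natural growth. First I would regularize: replace $F$ by smooth truncations $F_k$ (still satisfying \eqref{Vec-cond} with the same or slightly worse $\lambda$) and solve the approximate problems $-\Delta_p u_k = \abs{\nabla u_k}^p + \dv F_k$ with, say, the nonlinearity truncated as $\tfrac{\abs{\nabla u_k}^p}{1+\tfrac1k\abs{\nabla u_k}^p}$, obtaining bounded weak solutions $u_k\in\wpo[\Om]\cap L^\infty$ by monotone-operator / Schauder-type fixed point arguments. The crucial step is the uniform a priori bound: test the equation against $e^{\delta\abs{u_k}}-1$ or against a function built from the exponential change of variables $v_k = e^{u_k/(p-1)}$, which converts the natural-growth term into a linear potential term $\sigma v_k^{p-1}$; smallness of $\lambda$ in \eqref{Vec-cond} is exactly what makes the associated Schr\"odinger-type operator $-\Delta_p - (p-1)^{1-p}\sigma(\cdot)^{p-1}$ coercive, giving a uniform $\wpo[\Om]$-bound on $v_k$ and hence on $u_k$, and simultaneously a uniform bound on the constant $A$ in \eqref{nablau-cond-Rn} for $\nabla u_k$ — this is where the $C^1$ regularity of $\Om$ and a global gradient estimate (a Calder\'on–Zygmund / Reifenberg-flat type bound, presumably established earlier in the paper) enter, to control $\abs{\nabla u_k}^p$ up to the boundary rather than merely in the interior. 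Once these uniform bounds are in hand, extract a weakly convergent subsequence $u_k\rightharpoonup u$ in $\wpo[\Om]$, upgrade to a.e.\ convergence of the gradients by the usual Boccardo–Murat / monotonicity argument (using the uniform exponential integrability to pass to the limit in the natural-growth term via Vitali), and check that \eqref{nablau-cond-Rn} is inherited in the limit by lower semicontinuity / Fatou applied to the uniform bound on $A$.

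The main obstacle I anticipate is precisely the \emph{uniform boundary gradient estimate}: getting \eqref{nablau-cond-Rn} — a global weighted inequality with test functions supported all the way up to and across $\partial\Om$ — as opposed to the interior inequality \eqref{nablau-cond}, which the excerpt itself flags as the essential difference. This forces one to combine the exponential-transform coercivity (which controls $v_k$ in energy) with a genuinely global regularity theory for the $p$-Laplacian on $C^1$ (equivalently, sufficiently Reifenberg-flat) domains, and to track the dependence of all constants on $\lambda$ so that $\lambda_0$ can be chosen depending only on $n$, $p$, and $\Om$. A secondary technical point is verifying the solvability and mapping properties of the auxiliary divergence equation in the Sobolev-multiplier scale needed for part (i); this should follow from known capacitary characterizations of the multiplier space, but it must be stated carefully so that the produced $F$ lands in $L^{\frac{p}{p-1}}(\Om,\RR^n)$ and satisfies \eqref{Vec-cond}.
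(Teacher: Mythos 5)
Your treatment of part (i) is essentially the paper's: decompose $\sigma = -\dv(|\nabla u|^{p-2}\nabla u) - |\nabla u|^p$, keep the first term as is, and represent the zero-order part $|\nabla u|^p\,dx$ as $\dv G$ via a Newtonian/Green potential, then transfer the capacity bound on $|\nabla u|^p$ coming from \eqref{nablau-cond-Rn} into the multiplier bound for $|G|^{1/(p-1)}$ using the Maz'ya--Verbitsky capacitary characterization (Theorem \ref{CHAR}(iii)$\Leftrightarrow$(iv)) and the pointwise estimate $|\nabla_x G(x,y)|\lesssim G_1(x-y)$; this is exactly the paper's Lemma \ref{mutoF0}. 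One caveat: your parenthetical alternative ``$G=|\nabla u|^p\cdot V$ with $\dv V=1$'' does not work, since $\dv(hV)=h\dv V + V\cdot\nabla h \ne h$ in general; stick with the potential construction.

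For part (ii) there is a genuine gap in how you obtain the \emph{uniform} bound $|\nabla u_k|\in \mm^{1,p}(\Om)$. You propose to solve the quasilinear approximate problems $-\Delta_p u_k = \mathcal{H}_k(x,u_k,\nabla u_k) + \dv F$ directly (by monotone operators / Schauder on $W^{1,p}_0$) and then read off the constant $A$ in \eqref{nablau-cond-Rn} from a global gradient estimate plus coercivity of the Schr\"odinger form. But the global gradient estimate (Lemma \ref{capboundu}) applies only to equations of the form $\dv\aa(x,\nabla u)=\dv \widetilde F$, and to rewrite the right-hand side $\mathcal{H}_k(x,u_k,\nabla u_k)+\dv F$ in that form with $|\widetilde F|^{1/(p-1)}$ controlled \emph{uniformly in $k$} in $\mm^{1,p}(\Om)$, you need $\int_K |\mathcal{H}_k(\cdot,u_k,\nabla u_k)|\,dx \lesssim \capacity_{1,p}(K)$ uniformly in $k$; since $|\mathcal{H}_k|\lesssim|\nabla u_k|^p$, this already presupposes $|\nabla u_k|\in\mm^{1,p}(\Om)$ with a $k$-uniform bound — precisely what you are trying to prove. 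The paper explicitly flags this: the pseudomonotone-operator solution of the truncated problem ``may not satisfy the property that $|\nabla u_k|\in \mm^{1,p}(\Om)$.'' The resolution in the paper is a Schauder fixed point on a \emph{ball of the multiplier space} $E_{T_0} = \{v : \norm{|\nabla v|}_{\mm^{1,p}(\Om)} \le T_0\norm{|F|^{1/(p-1)}}_{\mm^{1,p}(\Om)}\}$ (with the $W^{1,1}_0$ topology for compactness): one defines $u=S(v)$ as the solution of the \emph{frozen} problem $-\dv\aa(x,\nabla u)=\mathcal{H}_k(x,v,\nabla v)+\dv F$, uses Lemma \ref{mutoF0} to write $\mathcal{H}_k(x,v,\nabla v)=\dv F_k$ with $|F_k|^{1/(p-1)}$ bounded in $\mm^{1,p}$ in terms of $|\nabla v|$ (which \emph{is} known to be a multiplier since $v\in E_{T_0}$), applies Lemma \ref{capboundu} to the frozen problem, and shows $S:E_{T_0}\to E_{T_0}$ when $\norm{|F|^{1/(p-1)}}_{\mm^{1,p}}$ is small. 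This breaks the circularity that your outline leaves open. The remaining steps you describe — the exponential a priori bound $e^{\mu|u_k|}-1\in W^{1,p}_0$, Boccardo--Murat convergence, splitting into truncated and tail parts, Fatou for the limiting bound — are in line with what the paper does, but the construction of the approximants with the right uniform multiplier bound is the crux and needs the fixed-point-in-$\mm^{1,p}$ device.
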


The condition \eqref{Vec-cond} simply means that the function $|F|^\frac{1}{p-1}\chi_{\Om}$ belongs to the space of Sobolev multipliers $M(W^{1,p}(\RR^n)\rightarrow L^p(\RR^n))$, which consists of functions $f\in L^{p}_{\rm loc}(\RR^n)$ such that   
\begin{equation*}
 \int_{\RR^n} |f|^{p} |\varphi|^p dx \leq C \int_{\RR^n} (|\nabla \varphi|^p + |\varphi|^p)  dx \quad \forall \varphi\in C_c^\infty(\RR^n),
\end{equation*}
for some $C>0$. The norm of such $f$ is the $p$-th root of the best constant $C$ in the above inequality.

For our purpose, we denote by ${\rm M}^{1,p}(\Om)$ the space of functions $f\in L^p(\Om)$ such that $f\chi_\Om\in M(W^{1,p}(\RR^n)\rightarrow L^p(\RR^n))$ with norm $\norm{f}_{{\rm M}^{1,p}(\Om)}:= \norm{f\chi_{\Om}}_{M(W^{1,p}(\RR^n)\rightarrow L^p(\RR^n))}$. \textcolor{black}{The space  ${\rm M}^{1,p}(\Om)$  can also be described using the capacity associated to the  Sobolev space $W^{1,p}(\RR^n)$; see Section \ref{ABO} below.}
\textcolor{black}{Moreover, it is known that} for any bounded Lipschitz domain $\Om$, the space  ${\rm M}^{1,p}(\Om)$ coincides with the  multiplier space  $M(W^{1,p}(\Om)\rightarrow L^p(\Om))$, which consists of functions $f\in L^{p}(\Om)$ such that   
\begin{equation*}
 \int_{\Om} |f|^{p} |\varphi|^p dx \leq C \int_{\Om} (|\nabla \varphi|^p + |\varphi|^p)  dx \quad \forall \varphi\in C^\infty(\Om),
\end{equation*}
for some $C>0$; see \cite[Theorem 9.3.1]{MS2}.

 It is worth pointing out, as we show in Theorem \ref{main2} below, that the solution  $u$ obtained in Theorem \ref{weakzero}(ii) obeys a stability estimate
$$\norm{|\nabla u|}_{{\rm M}^{1,p}(\Om)} \leq T_0  \norm{|F|^{\frac{1}{p-1}}}_{{\rm M}^{1,p}(\Om)}$$
for a constant $T_0>0$. Moreover, we have $e^{\mu|u|}-1\in W^{1,p}_{0}(\Om)$ provided $\mu\in[0, \mu_0]$ where $\mu_0=C \norm{|F|^{\frac{1}{p-1}}}_{{\rm M}^{1,p}(\Om)}^{-1}$ for some $C>0$. In particular, the solution here is zero whenever $\sigma=0$.
Note that, even for $\sigma=0$, in general  $W^{1,p}_0(\Om)$ solutions to \eqref{basic_pde} are not unique;  
see \cite[Remark 2.11]{FM2}.
 
 An existence criterion in the spirit of Theorem \ref{weakzero} also holds for equation $\eqref{basic_pde2}$
under quite general assumptions on $\mathcal{A}$, $\mathcal{B}$ and $\Om$. In particular,  $\mathcal{A}(x,\xi)$ could be discontinuous in the $x$-variable and $B$ could include a zero order term. Moreover, $\Om$ could be irregular and include
certain Lipschitz or fractal domains. These assumptions will be made precise in the next section. The result for equation $\eqref{basic_pde2}$, which  is the main result of the paper and includes Theorem \ref{weakzero} as a special case,   will be  treated in Section \ref{main-AB} (see Theorem \ref{main2} below).

We   next have the following  remarks.
\begin{remark} Let $\widetilde{\Om}$ be another bounded open set such that $\Om\Subset \widetilde{\Om}$. Then by Poincar\'e's inequality we see that 
 \eqref{Vec-cond} is also  equivalent to the homogeneous inequality 
\begin{equation*}
 \int_{\Om} |F|^{\frac{p}{p-1}} |\varphi|^p dx \leq \lambda \int_{\widetilde{\Om}} |\nabla \varphi|^p   dx \quad \forall \varphi\in C_c^\infty(\widetilde{\Om}),
\end{equation*}
for some $\lambda>0$. One also has similar statements for \eqref{possig-Rn} and \eqref{nablau-cond-Rn}. 
\end{remark}

\begin{remark} Let $\sigma$ be a nonnegative measure such that ${\rm supp}(\sigma)\Subset\Om$. If the first equation in \eqref{basic_pde}  has a distributional solution $u\in W^{1,p}_{\rm loc}(\Om)$ (without any boundary condition), then it is still necessary that $\sigma={\rm div}\, F$ with $|F|^{\frac{1}{p-1}}\in {\rm M}^{1,p}(\Om)$. This follows from \eqref{possig-Rn} and Lemma \ref{mutoF0} below. This shows that the space  ${\rm M}^{1,p}(\Om)$ is quite natural for \eqref{basic_pde} at least for such data $\sigma$.
\end{remark}

We now  mention some of the relevant results in the literature on the existence of $W^{1,p}_0(\Om)$ solutions to   \eqref{basic_pde} or \eqref{basic_pde2}. 
 In \cite{FM1, FM2} an existence result in $W^{1,p}_0$ was obtained for small data $\sigma\in [W_0^{1, \frac{n}{n-p+1}}(\Om)]^{*}$, i.e., $\sigma=
\dv F$ where $|F|^{\frac{1}{p-1}}\in L^{n}(\Om)$ with a small norm. Later, it was shown in \cite{FM3} that if $\sigma=
\dv F$ where $|F|^{\frac{1}{p-1}}\in L^{n, \infty}(\Om)$ (the weak Lebesgue space) with a small norm than \eqref{basic_pde2} admits a solution. Recently in \cite{MP}, an existence result was obtained for  $\sigma=\dv F$ provided $|F|^{\frac{1}{p-1}}$ is small in $\mathcal{L}^{(1+\varepsilon)p, (1+\varepsilon)p}(\Om)$ provided $0<\varepsilon < n/p-1$. Here $\mathcal{L}^{(1+\varepsilon)p, (1+\varepsilon)p}(\Om)$, $0 < \varepsilon < n/p-1$, is a Morrey space with norm given by 
$$\norm{f}^{(1+\varepsilon)p}_{\mathcal{L}^{(1+\varepsilon)p, (1+\varepsilon)p}(\Om)}= \sup \Big[r^{(1+\varepsilon)p-n} \int_{B_r(z)\cap\Om} |f|^{(1+\varepsilon)p}dx\Big], $$
where the supremum is taken over $z\in\Om$ and $0<r\leq {\rm diam}(\Om)$. Note that one has the following inclusions:
$$L^{n}(\Om) \subset L^{n, \infty}(\Om) \subset \mathcal{L}^{(1+\varepsilon)p, (1+\varepsilon)p}(\Om)$$
provided $0< \varepsilon<n/p-1$.  That $|F|^{\frac{1}{p-1}} \in \mathcal{L}^{(1+\varepsilon)p, (1+\varepsilon)p}(\Om)$ implies inequality  \eqref{Vec-cond}, i.e.,
$$\mathcal{L}^{(1+\varepsilon)p, (1+\varepsilon)p}(\Om) \subset {\rm M}^{1,p}(\Om), \quad 0<\varepsilon < n/p-1,$$
 is well-known as it is a special case of the so-called Fefferman-Phong type conditions (see, e.g.,  \cite{ChWW,DPT,  Fef, P, SW}).

We note that there are also existence results obtained for \eqref{basic_pde} under weaker conditions on 
$\sigma$ and sometimes with sharp  constants of smallness; see \cite{ADP, FV, HBV} for nonnegative measure data and \cite{AP2,  JMV1, JMV2} for distributional data. See also \cite{FM1, FM2, FM3}. However, the solutions obtained in those papers may not  behave very well at the boundary of $\Om$, i.e., in general they do not satisfy  inequality \eqref{nablau-cond-Rn}. See also the 
earlier work \cite{HMV} where an existence result was obtained in the whole space $\Om=\RR^n$ in the  `linear' case $p=2$ for nonnegative measure data.

We now briefly describe the strategy that we use to construct a solution $u\in W^{1,p}_0(\Om)$ of \eqref{basic_pde} such that 
$|\nabla u|\in {\rm M}^{1,p}(\Om)$ under the assumption  $\sigma={\rm div}\, F$ where $|F|^{\frac{1}{p-1}}\in {\rm M}^{1,p}(\Om)$  with a small norm. As in \cite{FM2}, we start with  the approximate equation
\begin{equation} \label{basic_pde-k}
-\Delta_p u = \frac{|\nabla u|^p}{1+ k^{-1}|\nabla u|^p}  + \sigma \text{ in } \Omega,  \qquad u = 0 \text{ on } \partial \Omega, 
\end{equation}
 where the parameter $k>0$ is to be sent to infinity eventually.   Since $\sigma\in (\wpo)^{*}$ and  the first term on the right-hand side is uniformly bounded, by the theory of pseudomonotone operators (see, e.g., \cite{Li}), there exists a solution $u_{k}\in \wpo$ to \eqref{basic_pde-k}. However, this solution may not satisfy the property 
that $|\nabla u_k|\in {\rm M}^{1,p}(\Om)$. Thus, to have this requiblack property for  $u_k$ we have to construct it by a different way. As $\norm{|F|^{\frac{1}{p-1}}}_{{\rm M}^{1,p}(\Om)}$ is small, it is natural to use Schauder's Fixed Point Theorem in a small ball of    ${\rm M}^{1,p}(\Om)$. The main difficulty in this approach is an a priori   gradient estimate
of the form 
$$\norm{|\nabla u|}_{{\rm M}^{1,p}(\Om)}\leq C \norm{|F|^{\frac{1}{p-1}}}_{{\rm M}^{1,p}(\Om)}$$
for solutions $u$ to  the basic equation
\begin{equation}\label{linear_pde} 
-\Delta_p u =  {\rm div}\, F \text{ in } \Omega,  \qquad u = 0 \text{ on } \partial \Omega. 
\end{equation}
Such a delicate  \textcolor{black}{gradient estimate} can be obtained from an end-point weighted gradient estimate for \eqref{linear_pde} and has been prepablack  in our earlier work \cite{AP1}; see Lemma \ref{capboundu} below.     Once  solutions $\{u_k\}$ to
\eqref{basic_pde-k} have been obtained with gradients being uniformly controlled in ${\rm M}^{1,p}(\Om)$, the next step is to pass to the limit in \eqref{basic_pde-k}, with $u_k$ in place of $u$, as $k\rightarrow\infty$.  For that, it is enough to show the strong convergence of $\{u_k\}$ in $W^{1,p}_0(\Om)$, a task that can be done via the truncation technique
and appropriate test functions as in  \cite{FM2, FM3}. We mention that in our scenario this is possible since  we have a uniform bound for $\{e^{|u_k|}-1 \}$ in $W^{1,p}_0(\Om)$, another important a priori estimate also obtained in Lemma \ref{capboundu}.

To conclude this section, we discuss a connection of \eqref{basic_pde} and
a Schr\"odinger  type equation with distributional potential:
\begin{equation}\label{basic_pde-schr}
-\Delta_p v =   \sigma\, v^{p-1} \text{ in } \Omega, \qquad v \geq 0  \text{ in } \Omega, \qquad v = 1   \text{ on } \partial \Omega. 
\end{equation}

 This equation  is interesting in its own right and its existence theory has been studied,  e.g., in \cite{ADP, HBV, JMV1, JMV2}. For $\sigma\in (W^{1,p}_0(\Om))^{*}$, by a solution of \eqref{basic_pde-schr} we mean a nonnegative function $v$ such  that  $v-1\in W^{1,p}_{0}(\Om)$,  $v^{p-1}\in W^{1,p}_{\rm loc}(\Om)$, and 
\begin{equation*}
\int_{\Omega} |\nabla v|^{p-2} \nabla v \cdot \nabla \varphi dx=  \langle \sigma, v^{p-1}\varphi\rangle \qquad \forall  \varphi\in C_c^\infty(\Om). 
\end{equation*}
Since $v^{p-1}\varphi\in W^{1,p}(\Om)$ and ${\rm supp}(v^{p-1}\varphi)\Subset\Om$, this definition makes sense even for 
$\sigma$ such that $\sigma\in (W^{1,p}_0(\Om'))^{*}$ for any open set $\Om'\Subset\Om$ (see \cite{JMV1, JMV2}).

Formally, by using the transformation  $u\mapsto v:=e^{\frac{u}{p-1}}$, the equation 
\begin{equation} \label{basic_pde3}
-\Delta_p u = |\nabla u|^p + (p-1)^{p-1}\sigma  \text{ in } \Omega, \qquad u = 0  \text{ on } \partial \Omega, 
\end{equation}
is transformed into  \eqref{basic_pde-schr}.  Indeed,  using $\phi_k:=\varphi \min\{ e^u, k\}$, $k>0, \varphi\in C_c^\infty(\Om)$, as a test function for \eqref{basic_pde3} and then letting $k\rightarrow\infty$ one can rigorously show from Theorem \ref{main2}(ii) the following existence result for  \eqref{basic_pde-schr}.

\begin{theorem}  Let  $\Om$ be a bounded $C^1$ domain in $\RR^n$. There 
exist $\lambda_1=\lambda_1(n,p,\Om)>0$ and $T_1=T_1(n,p,\Om)>0$ such that if $\sigma={\rm div}\, F$ with $\norm{|F|^{\frac{1}{p-1}}}_{{\rm M}^{1,p}(\Om)}\leq \lambda_1^{1/p}$ then \eqref{basic_pde-schr} has a nonnegative  solution 
$v$ with   $|\nabla \log(v)|\in {\rm M}^{1,p}(\Om)$ and
$$\norm{|\nabla \log(v)|}_{{\rm M}^{1,p}(\Om)} \leq T_1  \norm{|F|^{\frac{1}{p-1}}}_{{\rm M}^{1,p}(\Om)}.$$
Moreover, we have $v^{q}-1\in W^{1,p}_0(\Om)$ for all $q\in[0,\mu_1]$ where 
$$\mu_1=C_1(n,p,\Om) \norm{|F|^{\frac{1}{p-1}}}_{{\rm M}^{1,p}(\Om)}^{-1}\geq \max\{1, p-1\}.$$ 
\end{theorem}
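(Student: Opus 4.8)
The plan is to derive the theorem from Theorem~\ref{main2}(ii) by means of the exponential substitution $v = e^{u/(p-1)}$, which formally transforms \eqref{basic_pde3} into \eqref{basic_pde-schr}; the real work is the correct bookkeeping of the constants of smallness and the rigorous justification of that substitution.

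First I would write $\sigma = \dv F$ and apply Theorem~\ref{main2}(ii) to \eqref{basic_pde3}, i.e. to $-\Delta_p u = |\nabla u|^p + \dv\left((p-1)^{p-1}F\right)$. Recalling that the best constant $\lambda$ in \eqref{Vec-cond} equals $\norm{|F|^{1/(p-1)}}_{{\rm M}^{1,p}(\Om)}^{p}$ and that $\left|(p-1)^{p-1}F\right|^{1/(p-1)} = (p-1)\,|F|^{1/(p-1)}$, the hypothesis of Theorem~\ref{main2}(ii) for the field $(p-1)^{p-1}F$ holds as soon as $\norm{|F|^{1/(p-1)}}_{{\rm M}^{1,p}(\Om)} \le \lambda_0^{1/p}/(p-1)$, and it then yields $u \in W^{1,p}_0(\Om)$ solving \eqref{basic_pde3} with $\norm{|\nabla u|}_{{\rm M}^{1,p}(\Om)} \le T_0\,(p-1)\norm{|F|^{1/(p-1)}}_{{\rm M}^{1,p}(\Om)}$ and $e^{\mu|u|}-1 \in W^{1,p}_0(\Om)$ for all $\mu \in [0,\mu_0]$, where here $\mu_0 = C\,(p-1)^{-1}\norm{|F|^{1/(p-1)}}_{{\rm M}^{1,p}(\Om)}^{-1}$. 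I would fix $\lambda_1^{1/p}$ to be the minimum of $\lambda_0^{1/p}/(p-1)$ and of a further threshold forcing $\mu_1 := (p-1)\mu_0 = C\norm{|F|^{1/(p-1)}}_{{\rm M}^{1,p}(\Om)}^{-1}$ to satisfy $\mu_1 \ge \max\{1,p-1\}$ (so $C_1 = C$). For $q \in [0,\mu_1]$ one has $q/(p-1) \le \mu_0$, and the elementary inequalities $|e^{qu/(p-1)}-1| \le e^{q|u|/(p-1)}-1$ and $|\nabla(e^{qu/(p-1)}-1)| \le |\nabla(e^{q|u|/(p-1)}-1)|$ (valid a.e.), combined with a standard truncation argument, give $v^q - 1 = e^{qu/(p-1)}-1 \in W^{1,p}_0(\Om)$. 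Taking $q = 1$ and $q = p-1$ (both $\le\mu_1$) yields $v := e^{u/(p-1)} > 0$ with $v - 1 \in W^{1,p}_0(\Om)$ and $v^{p-1}-1 = e^u - 1 \in W^{1,p}_0(\Om)$; in particular $v^{p-1} \in W^{1,p}_{\rm loc}(\Om)$, the boundary condition $v = 1$ on $\partial\Om$ holds, and since $\nabla\log v = (p-1)^{-1}\nabla u$ we also get $\norm{|\nabla\log v|}_{{\rm M}^{1,p}(\Om)} \le T_0\norm{|F|^{1/(p-1)}}_{{\rm M}^{1,p}(\Om)}$, so $T_1 = T_0$.

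It then remains to check that $v$ solves \eqref{basic_pde-schr} in the required weak sense. The chain rule gives $\nabla v = (p-1)^{-1}v\,\nabla u$, hence $|\nabla v|^{p-2}\nabla v = (p-1)^{-(p-1)} e^{u}\,|\nabla u|^{p-2}\nabla u$. For $\varphi \in C_c^\infty(\Om)$ I would use the bounded admissible test function $\phi_k := \varphi\min\{e^u,k\}$, $k > 0$, in the weak formulation of \eqref{basic_pde3}; since $\nabla(\min\{e^u,k\}) = e^u\chi_{\{u < \log k\}}\nabla u$ a.e., expanding $\nabla\phi_k$ and cancelling the two coinciding terms $\itl \varphi\, e^u\chi_{\{u<\log k\}}|\nabla u|^p\dx$ leaves
\begin{equation*}
\itl \min\{e^u,k\}\,|\nabla u|^{p-2}\nabla u\cdot\nabla\varphi\dx = \int_{\{u\ge\log k\}} k\,|\nabla u|^p\varphi\dx + (p-1)^{p-1}\langle\sigma,\phi_k\rangle .
\end{equation*}
Letting $k \to \infty$: from $e^u - 1 \in W^{1,p}_0(\Om)$ one gets $e^u\nabla u \in L^p(\Om)$ and $e^u \in L^p(\Om)$, so (using $|\nabla u| \in L^p$) $e^u|\nabla u|^p \in L^1(\Om)$ and $e^u|\nabla u|^{p-1} \le e^u|\nabla u|^p + e^u \in L^1(\Om)$; dominated convergence sends the left side to $\itl e^u|\nabla u|^{p-2}\nabla u\cdot\nabla\varphi\dx$, the middle term is bounded by $\norm{\varphi}_{\infty}\int_{\{u\ge\log k\}} e^u|\nabla u|^p\dx \to 0$, and $\langle\sigma,\phi_k\rangle = -\itl F\cdot\nabla\phi_k\dx \to -\itl F\cdot\nabla(\varphi\, e^u)\dx = \langle\sigma,\varphi\, e^u\rangle$. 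Invoking $e^u = v^{p-1}$ and the formula for $|\nabla v|^{p-2}\nabla v$, the limiting identity is exactly $\itl |\nabla v|^{p-2}\nabla v\cdot\nabla\varphi\dx = \langle\sigma, v^{p-1}\varphi\rangle$ for every $\varphi \in C_c^\infty(\Om)$, which is the definition of a solution of \eqref{basic_pde-schr}; all the quantitative assertions were already obtained above.

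I expect the main obstacle to be the passage to the limit in this last step: the spurious term $\int_{\{u\ge\log k\}} k\,|\nabla u|^p\varphi\dx$ cannot be controlled by mere membership $u \in W^{1,p}_0(\Om)$ and is tamed precisely by the refined a priori exponential integrability $e^{\mu|u|}-1 \in W^{1,p}_0(\Om)$ (equivalently $e^u|\nabla u|^p \in L^1$) furnished by Lemma~\ref{capboundu} and Theorem~\ref{main2}(ii); this is where the analysis of Section~\ref{main-AB} genuinely enters. A secondary, purely computational, difficulty is arranging $\lambda_1$ so that $\mu_1 \ge \max\{1,p-1\}$, which is what places $v - 1$ and $v^{p-1} - 1$ in $W^{1,p}_0(\Om)$ and makes the weak formulation of \eqref{basic_pde-schr} legitimate.
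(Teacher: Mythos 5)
Your proof is correct and follows exactly the route the paper prescribes (apply Theorem \ref{main2}(ii) to \eqref{basic_pde3}, then test with $\phi_k=\varphi\min\{e^u,k\}$ and pass to the limit), with the constant bookkeeping, the integrability facts $e^u\in L^p$, $e^u|\nabla u|\in L^p$, $e^u|\nabla u|^p\in L^1$ supplied by the $e^{\mu|u|}-1\in W^{1,p}_0(\Om)$ estimate, and the threshold forcing $\mu_1\geq\max\{1,p-1\}$ all handled as needed. The paper only sketches this argument, and your write-up is an accurate filling-in of that sketch.
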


\section{Assumptions on \texorpdfstring{$\mathcal{A}$}., \texorpdfstring{$\mathcal{B}$}., \texorpdfstring{$\Om$}., and  preliminary results} \label{ABO}

We now make precise the assumptions on the nonlinearities $\mathcal{A}$,  $\mathcal{B}$, and on the domain $\Om$ that appear in equation \eqref{basic_pde2}. All of these assumptions will be needed in Theorem \ref{main2}(ii) below.

\noindent{\bf Assumption 1.} In \eqref{basic_pde2}, the nonlinearity $\aa : \RR^n \times \RR^n \rightarrow \RR^n$ is a  Carath\'edory  function, i.e., $\aa(x,\xi)$ is measurable in $x$ for every $\xi$ and continuous in $\xi$ for a.e. $x\in\RR^n$. Moreover, 
$\mathcal{A}(x, \xi)$ is continuously differentiable in $\xi$ away from the origin for a.e. $x \in \RR^n$. We assume that for some $p>1$, it holds that
\begin{gather}
\label{cond2} \langle \aa(x,\xi) - \aa(x,\eta), \xi - \eta \rangle \geq \Lambda_0 (|\xi|^2 + |\eta|^2)^{\frac{p-2}{2}} |\xi- \eta|^2,\\
\label{cond1} |\aa(x,\xi)| \leq \Lambda_1 |\xi|^{p-1}, \quad |\nabla_\xi \aa(x,\zeta)| \leq \Lambda_1 |\xi|^{p-2}
\end{gather}
for every  $(\xi, \eta)\in \RR^n \times \RR^n\setminus (0,0)$ and a.e. $x \in\RR^n$. Here $\Lambda_0$ and $\Lambda_1$ are positive constants.

Additionally, we suppose that $\aa(x,\xi)$ satisfies the following \gbmo condition in the $x$-variable, where $\gamma>0$ is sufficiently small.

\begin{definition}
\label{BMO-condition}
 Given two positive numbers $\ga$ and $R_0$, we say that $\aa(x,\xi)$ satisfies a \gbmo condition  if
\beas
\left[ \aa \right]^{R_0} := \sup\limits_{y \in \RR^n,\, 0<r\leq R_0}  \fintegral_{B_r(y)} \Upsilon(\aa,\, B_r(y)))(x) \, dx  \leq \ga, 
\enas
where for a ball $B$ we set
\beas
\Upsilon(\aa,B) (x) := \sup_{\xi \in \mathbb{R}^{n}\setminus \{0\}} \frac{\left|\aa({x, \xi}) - \frac{1}{|B|} \integral_B \aa(y,\xi)\, dy\right|}{|\xi|^{p-1}}.
\enas
\end{definition}

Note that in the linear case, where $\aa(x,\xi) = A(x)\xi$ for an elliptic matrix $A(x)$, we see that
\beas
\Upsilon(\aa,B)(x) \leq \left|A(x) - \frac{1}{|B|} \integral_B A(y)\, dy\right|
\enas
for a.e. $x \in \RR^n$. Thus Definition \ref{BMO-condition} can be viewed as a natural extension of the standard small BMO condition to the nonlinear setting.  We remark that the \gbmo condition allows the nonlinearity $\aa(x,\xi)$ to have certain discontinuity in $x$, and it can be used as an appropriate substitute for the Sarason VMO  (vanishing mean oscillation) condition  \cite{Sa}.

\noindent{\bf Assumption 2.} In \eqref{basic_pde2}, the nonlinearity $\mathcal{B} : \Om \times \RR \times \RR^n \rightarrow \RR$ is a Carath\'edory  function which satisfies, for a.e. $x\in\Om$, every $s\in\RR$, and every $\xi\in\RR^n$,
\begin{equation}\label{Bcond}
|\mathcal{B}(x,s,\xi)|\leq b_0 |\xi|^p +b_1 |s|^m, \quad \mathcal{B}(x,s,\xi){\rm sign}(s) \leq  b_2 |\xi|^p,
\end{equation}
 where $m>p-1$, and $b_0, b_1$, $b_2$ are nonnegative constants. 

\noindent{\bf Assumption 3.} With regard to the underlying domain $\Om$, we assume that its boundary  is sufficiently  flat in the sense of Reifenberg. This means essentially that at each boundary point and every scale, the boundary of $\Om$ is trapped  in between two hyperplanes separated by a distance proportional to the scale. Precisely, we assume that 
$\Om$ is \gflt  for a sufficiently small $\gamma>0$. Below is the definition of a \gflt domain.
\begin{definition}
Given $\ga \in (0,1)$ and $R_0 > 0$, we say that $\Omega$ is  \gflt  if for every $x_0 \in \partial \Omega$ and every $r \in (0,R_0]$, there exists a system of coordinates $\{y_1,y_2,\ldots,y_n\}$, which may depend on $r$ and $x_0$, so that in this coordinate system $x_0 = 0$ and that 
\beas
B_r(0) \cap \{y_n > \ga r\} \subset B_r (0) \cap \Omega \subset B_r(0) \cap \{y_n > -\ga r\}.
\enas
\end{definition}

For more on Reifenberg flat domains and their many applications, we refer to the papers \cite{HM,Jon,KT1,KT2,Rei,Tor}. We mention here that Reifenberg flat domains can be very rough. They include Lipschitz domains with sufficiently small Lipschitz constants (see \cite{Tor}) and even some domains with fractal boundaries. In particular, all bounded  domains 
with $C^1$ boundaries are allowed in this paper.

Let ${\rm\bf G}_1 \mu$ be the first order Bessel's potential of a nonnegative locally finite measure $\mu$ defined by
$${\rm\bf G}_1 \mu(x)=\int_{\RR^n} G_1(x-y)  d\mu(y), \qquad x\in\RR^n,$$   
where $G_1(x)$ is the Bessel kernel of order one defined via its Fourier transform by $\hat{G}_1(\xi)= (1+|\xi|^2)^{-1/2}$.

{Let  $\cpt(\cdot)$ denote the capacity 
associated to the Sobolev space $W^{1,p}(\RR^n)$, i.e., 
$$\cpt(K) := \inf \left\{ \int_{\RR^n}(|\nabla \phi|^p  + \varphi^p) dx\, : \ \phi \geq 1 \text{ on } K, \ \ \,  0 \leq \phi \leq 1,\ \ \phi \in C_c^{\infty} (\RR^n) \right\}$$
\textcolor{black}{for each compact set $K\subset\RR^n$.}  It is well-known that $\cpt(\cdot)$ is equivalent to the Bessel capacity 
$${\rm Cap}_{{\rm\bf G}_1 ,p}(K):= \inf\left\{\int_{\RR^n} f^p dx:  f\in L^p(\RR^n), f\geq 0, {\rm ~and~} {\rm\bf G}_1 f \geq 1 {\rm ~on~} K \right\}.$$


We next recall a special case of  Theorem 1.2 in \cite{MV}. This theorem enables us to reformulate the existence problem for \eqref{basic_pde2} by means of the  capacity   $\cpt(\cdot)$.

\begin{theorem}\label{CHAR} Let  $\nu$ be  a nonnegative locally finite
measure in $\RR^n$. Then  the following properties of $\nu$
are equivalent.

{\rm (i)} There is a constant $A_1>0$ such that
\begin{equation*}
\int_{\RR^n} |\varphi|^p d\nu \leq A_1 \int_{\RR^n} (|\nabla \varphi|^{p}) + |\varphi|^p)dx
\end{equation*}
for all  $\varphi\in C_c^{\infty}(\RR^n)$.

{\rm (ii)} There is a constant $A_2>0$ such that
\begin{equation*}
\int_{\RR^n} ({\rm\bf G}_{1}f)^{p}d\nu \leq A_2 \int_{\RR^n} f^{p}dx
\end{equation*}
for all nonnegative $f\in L^{p}(\RR^n)$.

{\rm (iii)} There is a constant $A_3>0$ such that
\begin{equation*} 
\nu(K)\leq A_3\, {\rm Cap}_{1, \, p}(K)
\end{equation*}
for all compact sets $K \subset \RR^n$.

{\rm (iv)} There is a constant $A_4>0$ such that

\begin{equation*} 
\int_{K} ({\rm\bf G}_{1}\nu)^{\frac{p}{p-1}}dx   \leq A^{\frac{p}{p-1}}_4\, {\rm Cap}_{1, \, p}(K)
\end{equation*}
for all compact sets $K\subset\RR^n$.

Moreover, the least possible values of the constants $A_1$, $A_2$, $A_3$, and $A_4$ are  comparable to each other. 
\end{theorem}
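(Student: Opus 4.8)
The plan is to prove the cycle of implications (iii) $\Rightarrow$ (ii) $\Rightarrow$ (i) $\Rightarrow$ (iii), and then (iii) $\Leftrightarrow$ (iv), keeping track of the constants throughout so that the final comparability claim follows. The equivalence (i) $\Leftrightarrow$ (ii) is the standard duality between trace inequalities for $W^{1,p}(\RR^n)$ and for the Bessel potential operator ${\rm\bf G}_1$: since $\|\varphi\|_{W^{1,p}}$ is comparable to $\|f\|_{L^p}$ when $\varphi = {\rm\bf G}_1 f$ (with $f = (I-\Delta)^{1/2}\varphi \geq 0$ after a routine positivity reduction — one replaces $f$ by $|f|$ using $|{\rm\bf G}_1 f|\le {\rm\bf G}_1|f|$), the two inequalities are literally the same statement transported along the isometry ${\rm\bf G}_1\colon L^p \to L^{1,p}$. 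For the passage to (iii), given (ii) (equivalently (i)), one tests against an optimal or near-optimal function for ${\rm Cap}_{1,p}(K)$: if $\phi\in C_c^\infty$ with $\phi\ge 1$ on $K$ and $\|\phi\|_{W^{1,p}}^p \le 2\,{\rm Cap}_{1,p}(K)$, then $\nu(K)\le \int|\phi|^p\,d\nu \le A_1\|\phi\|_{W^{1,p}}^p \le 2A_1\,{\rm Cap}_{1,p}(K)$, giving (iii) with $A_3\lesssim A_1$.

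The genuinely substantive direction is (iii) $\Rightarrow$ (iv) $\Rightarrow$ (ii), which is where the nontrivial harmonic-analytic input enters and which I expect to be the main obstacle. For (iii) $\Rightarrow$ (iv) the natural approach is a dyadic/Wolff-potential argument: one wants to bound $\int_K ({\rm\bf G}_1\nu)^{p'}\,dx$ by ${\rm Cap}_{1,p}(K)$. A clean way is via the (truncated) Wolff potential $W_{1,p}\nu$ and the pointwise equivalence ${\rm\bf G}_1\nu(x)^{p'} \sim$ (up to constants and lower-order Bessel corrections) an integral of $W_{1,p}$-type, combined with the strong-type capacitary estimate $\int_K (W_{1,p}\nu)\,d\nu \lesssim {\rm Cap}_{1,p}(K)$ that holds precisely under hypothesis (iii); this is exactly the content of Theorem 1.2 of \cite{MV}, so strictly speaking I would simply invoke that theorem, but if a self-contained argument is wanted, the dyadic decomposition of ${\rm\bf G}_1$ into Calder\'on--Zygmund scales and a good-$\lambda$ or capacitary-strong-type inequality (in the spirit of Maz'ya, Adams--Hedberg, Kerman--Sawyer) is the route. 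The reverse implication (iv) $\Rightarrow$ (ii) is a duality argument: by the definition of ${\rm Cap}_{1,p}$ via ${\rm\bf G}_1 f\ge 1$ on $K$, condition (iv) says that the measure $d\mu = ({\rm\bf G}_1\nu)^{p'}\,dx$ satisfies a capacity bound, hence the trace inequality $\int ({\rm\bf G}_1 g)^p\,d\mu \lesssim \int g^p\,dx$; unwinding $\int ({\rm\bf G}_1 g)^p ({\rm\bf G}_1\nu)^{p'}\,dx$ and using Fubini together with the self-duality $\langle {\rm\bf G}_1 g, \nu\rangle = \langle g, {\rm\bf G}_1\nu\rangle$ and H\"older's inequality delivers $\int ({\rm\bf G}_1 g)^p\,d\nu \lesssim \int g^p\,dx$, which is (ii).

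Finally, the constant comparability is obtained by threading the explicit constants through each of the above steps: each implication loses only a dimensional/$p$-dependent multiplicative factor and never a power or an $n$-dependent exponent, so the four optimal constants $A_1,\dots,A_4$ are mutually comparable with constants depending only on $n$ and $p$. The only place where care is needed is the Wolff-potential step, where the comparability constant in ${\rm\bf G}_1\nu \sim$ Wolff potential is dimensional; since we are working at a fixed order $1$ and fixed $p$, this is harmless. I would therefore present the proof as: (a) state the duality ${\rm\bf G}_1\colon L^p\to L^{1,p}$ and deduce (i) $\Leftrightarrow$ (ii); (b) the easy capacitary test-function argument for (i)/(ii) $\Rightarrow$ (iii); (c) cite or reprove the Maz'ya--Verbitsky strong-type estimate for (iii) $\Rightarrow$ (iv); (d) the Fubini/H\"older duality for (iv) $\Rightarrow$ (ii); (e) collect constants. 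The expected hard part is step (c); everything else is soft functional analysis.
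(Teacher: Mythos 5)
The paper does not prove Theorem~\ref{CHAR}: it is prefaced with ``We next recall a special case of Theorem 1.2 in \cite{MV},'' and the entire equivalence (i)--(iv) together with the comparability of the optimal constants is cited wholesale from Maz'ya--Verbitsky. Your instinct to ``simply invoke that theorem'' is therefore the same move the authors make, and for the purposes of this paper that is the correct level of detail.

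That said, the supplementary sketch you provide has two issues worth naming. First, your (iv) $\Rightarrow$ (ii) step is circular as written: you set $d\mu = (\mathbf{G}_1\nu)^{p'}\,dx$, observe that (iv) is exactly condition (iii) for $\mu$, and then assert ``hence the trace inequality $\int(\mathbf{G}_1 g)^p\,d\mu \lesssim \int g^p\,dx$'' --- but that inference is precisely the implication (iii) $\Rightarrow$ (ii) (applied to $\mu$), which is what your cycle is in the middle of establishing. In the actual Maz'ya--Verbitsky argument the passage from the potential bound (iv) to the trace inequality (ii) is a genuine and separate step (via an integration-by-parts identity for $\int(\mathbf{G}_1 g)^p\,d\nu$ together with a boundedness estimate for $\mathbf{G}_1\big[(\mathbf{G}_1 g)^{p-1}\nu\big]$ in $L^{p'}$), not a reapplication of (iii) $\Rightarrow$ (ii). Second, your claim that (i) and (ii) are ``literally the same statement transported along the isometry $\mathbf{G}_1\colon L^p\to L^{1,p}$'' overstates the situation: for $p\ne 2$ the map $\mathbf{G}_1$ is not an isometry from $L^p$ onto $W^{1,p}$; the identification $W^{1,p}(\RR^n)=L^{1,p}(\RR^n)$ with \emph{equivalent} (not equal) norms is Calder\'on's theorem and rests on the Mihlin multiplier theorem, and one also needs a density/truncation argument since $\mathbf{G}_1 f$ is not compactly supported. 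These are standard facts, but they should not be elided as purely formal. Since the paper itself offers no proof, none of this affects a reading of the paper, but if you intend your sketch to stand alone the (iv) $\Rightarrow$ (ii) step needs to be replaced by the genuine duality argument from \cite{MV} rather than an appeal to the very implication being proved.
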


We now introduce a function space associated to the capacity   $\cpt(\cdot)$, which  plays a crucial role in our study of \eqref{basic_pde2}. This is the space $\mps$ that was discussed in Section \ref{Sec1}.
 
\begin{definition} Let $\Om\subset\RR^n$ be an open set. We define $\mps$ to be the set of all functions $f\in L^p(\Om)$ such that there exists $C>0$ such that
\begin{equation}\label{cap}
\int_{K}|f|^p dx \leq C \cpt(K)
\end{equation}
for all compact  sets $K\subset\Om$  The norm of $f\in\mps$ is given by
 $$\norm{f}_{\mps} := \sup_{K\subset\Om} \left[ \frac{\int_{K}|f|^p dx}{\cpt(K)}\right]^{\frac{1}{p}}, $$
where the  sets $K$ vary over compact sets of $\Om$ such that $\cpt(K)>0$. 
\end{definition}

\begin{remark} For $f\in\mps$, we will always implicitly   extend $f$ by zero to $\RR^n\setminus\Om$, then inequality \eqref{cap} actually holds for all compact sets $K\subset\RR^n$. Thus by Theorem \ref{CHAR}  we see that    $f\in\mps$ if and only if there exists $C>0$ such that 
\begin{equation}\label{MA}
\int_{\Om} |\varphi|^p |f|^pdx \leq C \int_{\RR^n} (|\nabla\varphi|^p + |\varphi|^p) dx 
\end{equation}
for all $\varphi\in C_c^\infty(\RR^n)$. That is,
$f\in\mps$ if and only if $f\chi_{\Om}\in M(W^{1,p}(\RR^n)\rightarrow L^p(\RR^n))$. Moreover, the best constants $C$ in  \eqref{cap} and \eqref{MA}
are equivalent, i.e., their ratio is bounded from above and below by positive constants independent of $f$. 
In particular, inequalities \eqref{nablau-cond-Rn} and \eqref{Vec-cond} simply mean that $|F|^{\frac{1}{p-1}}$
and $|\nabla u|$ belong to ${\rm M}^{1,p}(\Om)$; and the best constants in  \eqref{nablau-cond-Rn} and \eqref{Vec-cond} are equivalent to 
$\norm{|\nabla u|}_{{\rm M}^{1,p}(\Om)}^{p}$ and $\norm{|F|^{\frac{1}{p-1}}}_{{\rm M}^{1,p}(\Om)}^{p}$, respectively.
\end{remark}

The following result will be useful to us.
\begin{lemma} \label{mutoF0} Suppose that $\mu$ is a finite sign measure in $\Om$ such that 
\begin{equation}\label{mucon}
|\mu|(K) \leq C_1 \, {\rm Cap}_{1,p}(K)
\end{equation}
holds for all compact sets $K\subset\Om$. Then we can write $\mu=\dv F$ in $\mathcal{D}'(\Om)$ for a vector field $F$ such that 
$|F|^{\frac{1}{p-1}}\in\mps$.  Moreover,
\begin{equation}\label{mutoF}
\norm{|F|^{\frac{1}{p-1}}}_{\mps} \leq C(n,p,{\rm diam}(\Om)) \, (C_1)^{\frac{1}{p-1}}. 
\end{equation}
\end{lemma}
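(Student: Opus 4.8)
The plan is to solve the equation $\Delta_p w = \mu$ (or rather an inhomogeneous variant) on a slightly larger domain and take $F$ to be essentially $|\nabla w|^{p-2}\nabla w$, after which the capacitary hypothesis \eqref{mucon} translates, via the known pointwise potential estimates for the $p$-Laplacian together with Theorem \ref{CHAR}, into the multiplier bound $|F|^{\frac{1}{p-1}}\in\mps$. Concretely, fix a bounded smooth domain $\widetilde\Om$ with $\Om\Subset\widetilde\Om\Subset B_R$ for a suitable $R$ comparable to $\diam(\Om)$, extend $\mu$ by zero to $\widetilde\Om$, and let $w\in W_0^{1,p}(\widetilde\Om)$ be the (unique) solution of $-\Delta_p w=\mu$ in $\widetilde\Om$, $w=0$ on $\partial\widetilde\Om$; this is well posed since condition \eqref{mucon} in particular gives $\mu\in W^{-1,p'}(\widetilde\Om)$. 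Then $\mu=\dv F$ in $\mathcal D'(\Om)$ with $F:=-|\nabla w|^{p-2}\nabla w$, so $|F|^{\frac1{p-1}}=|\nabla w|$.

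The heart of the matter is the estimate \eqref{mutoF}, i.e. $\norm{|\nabla w|}_{\mps}\le C\,(C_1)^{\frac1{p-1}}$. The key input is the Havin--Maz'ya--Wolff / Kilpel\"ainen--Mal\'y pointwise bound
\begin{equation*}
|\nabla w(x)|\;\lesssim\; \big[{\rm\bf W}_{1,p}^{|\mu|}(x)\big]^{\frac{1}{p-1}}\quad\text{plus lower-order terms supported in }\widetilde\Om,
\end{equation*}
or, in the form most convenient here, the linear-type bound $|\nabla w(x)|\lesssim {\rm\bf I}_1(|\mu|^{\frac1{p-1}})(x)$ that holds for the truncated Riesz potential on a bounded domain; replacing the Riesz potential by the Bessel potential ${\rm\bf G}_1$ is legitimate because everything lives in the bounded set $\widetilde\Om$, which introduces only the constant $C(n,p,\diam\Om)$. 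Granting such a bound, for any compact $K\subset\Om$ we estimate
\begin{equation*}
\int_K|\nabla w|^p\,dx\;\lesssim\;\int_K\big({\rm\bf G}_1(|\mu|^{\frac1{p-1}})\big)^p dx .
\end{equation*}
Now $|\mu|^{\frac1{p-1}}\,dx$ may be viewed through its relation to $|\mu|$: hypothesis \eqref{mucon} says $\nu:=|\mu|$ satisfies property (iii) of Theorem \ref{CHAR} with $A_3=C_1$, hence also property (ii), so $\int ({\rm\bf G}_1 f)^p d\nu\le C\,C_1\int f^p dx$. One wants instead a bound on $\int_K ({\rm\bf G}_1(|\mu|^{1/(p-1)}))^p dx\le C\,C_1^{p/(p-1)}\cpt(K)$. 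This is exactly the self-improving passage from a capacitary condition on $\nu$ to the Fefferman--Phong-type bound on the potential, and it follows by combining (iii)$\Rightarrow$(iv) of Theorem \ref{CHAR} (which controls $\int_K ({\rm\bf G}_1\nu)^{p/(p-1)}dx$) with a standard pointwise comparison ${\rm\bf G}_1(|\mu|^{1/(p-1)})\lesssim ({\rm\bf G}_1\mu)^{1/(p-1)}$ valid up to the constant $C(n,p,\diam\Om)$ because of compact support (this last comparison is the nonlinear Wolff-potential duality / ``reverse H\"older for potentials'' type estimate). Chaining these gives $\int_K |\nabla w|^p dx\le C\,C_1^{p/(p-1)}\,\cpt(K)$, which is \eqref{cap} with the constant $C^{1/p}$ replaced by $C\,(C_1)^{1/(p-1)}$, i.e. precisely \eqref{mutoF}.

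The step I expect to be the main obstacle is the gradient potential estimate itself together with the passage between ${\rm\bf G}_1$ and the Wolff/Riesz potentials on the bounded domain: one must be careful that the nonhomogeneity of $-\Delta_p$ (the Wolff potential ${\rm\bf W}_{1,p}^\mu$, not a linear potential) does not break the scaling, and that the boundary of $\widetilde\Om$ contributes only harmless bounded terms. An alternative that sidesteps the sharpest form of the gradient estimate is to quote directly the end-point weighted gradient estimate for $-\Delta_p u=\dv F$ from \cite{AP1} (invoked later as Lemma \ref{capboundu}) in a dual/bootstrap form, or simply to cite the corresponding result from the Maz'ya--Verbitsky circle of ideas; in the write-up I would isolate the potential bound as the one nontrivial ingredient and otherwise let Theorem \ref{CHAR} do the bookkeeping, since the equivalences (i)--(iv) there are exactly tailored to convert ``$|\mu|\le C_1\,\cpt$'' into the desired multiplier norm control on $|F|^{1/(p-1)}$.
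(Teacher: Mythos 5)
Your proposal goes in a genuinely wrong direction, and the extra nonlinear machinery you invoke introduces real gaps. The paper's proof is purely \emph{linear}: it does not solve any $p$-Laplace equation. Having extended $\mu$ by zero, the paper sets
$F(x) = -\int_B \nabla_x G(x,y)\,d\mu(y)$, where $G$ is the Green function of the ordinary Laplacian $-\Delta$ on a ball $B \supset \Om$ of radius $\diam(\Om)$. Then $\mu = \dv F$ in $\mathcal{D}'(\Om)$ is automatic, and the pointwise bound $|\nabla_x G(x,y)| \lesssim |x-y|^{1-n} \lesssim G_1(x-y)$ gives $|F| \lesssim {\rm\bf G}_1(|\mu|)$. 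At that point the implication (iii)$\Rightarrow$(iv) in Theorem~\ref{CHAR} converts \eqref{mucon} directly into $\int_K ({\rm\bf G}_1|\mu|)^{p/(p-1)}dx \lesssim C_1^{p/(p-1)}\cpt(K)$, which is exactly \eqref{mutoF}. There is no gradient regularity theory involved: the exponent $p/(p-1)$ is handled by the capacitary equivalence, not by any equation.

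Your route through $-\Delta_p w = \mu$ and $F = |\nabla w|^{p-2}\nabla w$ requires inputs that do not exist in the form you state. First, the gradient of a $p$-harmonic function with measure data is controlled (Duzaar--Mingione, Kuusi--Mingione) by a Wolff-type potential ${\rm\bf W}_{1/p,p}^{|\mu|}$, which is \emph{not} $\big({\rm\bf I}_1|\mu|\big)^{1/(p-1)}$ except when $p=2$; your bound $|\nabla w| \lesssim {\rm\bf I}_1(|\mu|^{1/(p-1)})$ is not a standard estimate and the object $|\mu|^{1/(p-1)}$ is not even well defined when $\mu$ is a singular measure. Second, the ``reverse H\"older for potentials'' comparison ${\rm\bf G}_1(|\mu|^{1/(p-1)}) \lesssim ({\rm\bf G}_1|\mu|)^{1/(p-1)}$ is false as a pointwise inequality in general — for $p>2$ the map $t\mapsto t^{1/(p-1)}$ is concave and Jensen would go the opposite way, and in any case ${\rm\bf G}_1$ is not an averaging operator. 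You are correct that Theorem~\ref{CHAR} (iii)$\Rightarrow$(iv) is the decisive tool for turning \eqref{mucon} into a $p/(p-1)$-integrability bound; the missing observation is that you should feed it $|F|\lesssim{\rm\bf G}_1|\mu|$ obtained \emph{linearly}, which makes both the equation $-\Delta_p w=\mu$ and the nonlinear potential theory superfluous.
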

\begin{proof}
After extending $\mu$ by zero outside $\Om$, we may write $\mu = {\rm div}\, F$ in the sense of distributions in $\Om$, where
\begin{equation*} 
F(x)=-\int_{B}\nabla_x G(x,y) d\mu(y).
\end{equation*}
Here $B$ is a ball of radius ${\rm diam}(\Om)$ containing $\Om$ and  $G(x,y)$ is the Green function with zero boundary
condition associated to $-\Delta$ on $B$. Note that we have 
\begin{equation*}
|\nabla_x G(x, y)|\leq C\, |x-y|^{1-n} \leq C(n, {\rm diam}(\Om))\, G_1(x-y)
\end{equation*} 
 for all $x, y\in B$ with $x\not=y$. Thus  $|F(x)| \leq C \, {\rm \bf G}_1(|\mu|)(x)$  which 
by Theorem \eqref{CHAR} yields that $|F|^{\frac{1}{p-1}}\in\mps$ along with estimate \eqref{mutoF}.
Here note that as $|\mu|$ is zero outside $\Om$, \eqref{mucon} actually holds for all compact sets $K\subset\RR^n$. 
\end{proof}

We now come to the key capacitary estimate that will  make it possible to obtain a solution of \eqref{basic_pde2} with strong regularity at the boundary of $\Om$. This important estimate was the main motivation of our earlier work \cite{AP1}.

\begin{lemma} \label{capboundu} Let $\mathcal{A}(x, \xi)$ and $\Om$ satisfy Assumptions 1 and 3. That is, we assume $\mathcal{A}$ satisfies \eqref{cond2}-\eqref{cond1}; $\mathcal{A}$ is $(\gamma, R)$-BMO; and
 $\Om$ is \gflt for a sufficiently small $\gamma= \gamma(n,p,\Lambda_0, \Lambda_1)>0$. Suppose that  $F$ is a vector field such that $|F|^{\frac{1}{p-1}}\in \mps$ and  $u\in \wpo$
is the unique solution of the equation
\begin{equation} \label{homoequa}
\dv \aa(x, \nabla u) = \dv F  \text{ in } \Omega, \qquad u = 0  \text{ on } \partial \Omega. 
\end{equation}
Then we have $|\nabla u|\in \mps$ with  
\begin{equation}\label{capbound-AP}
\norm{|\nabla u|}_{\mps} \leq C \norm{|F|^{\frac{1}{p-1}}}_{\mps},
\end{equation}
where $C=C(n,p,  \Lambda_0, \Lambda_1, {\rm diam}(\Om)/R_{0})$. Moreover, there exists a positive constant $C_0=C_0(n,p,{\rm diam}(\Om))$ such that 
 for any $\mu\in (0, \mu_0]$ with 
$$\mu_0=(\Lambda_0/C_0)^{\frac{1}{p-1}} \norm{|F|^{\frac{1}{p-1}}}_{\mps}^{-1},$$ we have $e^{\mu |u|}-1\in W^{1,p}_0(\Om)$ with
\begin{equation}\label{exp-W}
 \norm{e^{\mu |u|}-1}_{W^{1,p}_0(\Om)} \leq    C \mu \norm{F}_{L^{\frac{p}{p-1}}(\Om)}^{\frac{1}{p-1}},
 \end{equation}
where $C=C(p,\Lambda_0)$.

\end{lemma}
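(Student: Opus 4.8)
The proof of Lemma~\ref{capboundu} naturally splits into two parts: the capacitary gradient estimate \eqref{capbound-AP} and the exponential integrability estimate \eqref{exp-W}. For the first part, the plan is to invoke the end-point weighted gradient estimate for the equation $\dv \aa(x,\nabla u)=\dv F$ established in our earlier work \cite{AP1}. Specifically, one shows that for every compact $K\subset\Om$ (equivalently, using the Bessel capacity characterization in Theorem~\ref{CHAR}), $\int_K|\nabla u|^p\,dx$ is controlled by $\int_{\RR^n}({\rm\bf G}_1 w)^p\,dx$ where $w$ is a suitable density representing $|F|^{\frac{1}{p-1}}$ via the capacitary strong-type inequality; then part~(ii)$\Leftrightarrow$(iii) of Theorem~\ref{CHAR} converts this back into the capacitary form $\int_K|\nabla u|^p\,dx\leq C\,\norm{|F|^{\frac{1}{p-1}}}_{\mps}^p\,\cpt(K)$, with $C$ depending only on $n,p,\Lambda_0,\Lambda_1$ and the scale-invariant ratio $\diam(\Om)/R_0$. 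Taking the supremum over $K$ gives \eqref{capbound-AP}. The \gbmo\ hypothesis on $\aa$ and the \gflt\ hypothesis on $\Om$ enter precisely here, to guarantee the needed comparison/reverse-H\"older estimates up to the boundary; this is where smallness of $\gamma$ is used.

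For the exponential estimate \eqref{exp-W}, the plan is a direct energy computation using $w:=\sgn(u)\,(e^{\mu|u|}-1)$ as a test function in the weak formulation of \eqref{homoequa}. Note $w\in\wpo$ since $u\in\wpo$ and the truncations $T_j(u)$ give an approximating sequence with the right bounds once we know the final estimate is uniform; formally, $\nabla w=\mu\,e^{\mu|u|}\nabla u$. Plugging in and using ellipticity \eqref{cond2} in the form $\langle\aa(x,\nabla u),\nabla u\rangle\gtrsim\Lambda_0|\nabla u|^p$ (take $\eta=0$ in \eqref{cond2}) gives
\begin{equation*}
\Lambda_0\,\mu\int_\Om e^{\mu|u|}|\nabla u|^p\,dx \leq \int_\Om |F|\,\mu\,e^{\mu|u|}|\nabla u|\,dx.
\end{equation*}
Applying Young's inequality $|F|\,e^{\mu|u|}|\nabla u|\leq \tfrac{\Lambda_0}{2}e^{\mu|u|}|\nabla u|^p + C(p,\Lambda_0)\,e^{\mu|u|}|F|^{\frac{p}{p-1}}$, absorbing the first term, and then estimating $\int_\Om e^{\mu|u|}|F|^{\frac{p}{p-1}}\,dx$ by the multiplier inequality \eqref{MA} applied to $\varphi = e^{\mu|u|/p}-1+\text{const}$ — more precisely using that $|F|^{\frac1{p-1}}\in\mps$ together with $e^{\mu|u|/p}\in W^{1,p}$ — one finds that the choice $\mu\leq\mu_0=(\Lambda_0/C_0)^{\frac1{p-1}}\norm{|F|^{\frac1{p-1}}}_{\mps}^{-1}$ makes the resulting self-improving inequality close up, yielding a uniform bound on $\int_\Om e^{\mu|u|}|\nabla u|^p\,dx$, hence on $\norm{\nabla(e^{\mu|u|}-1)}_{L^p}$. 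A final application of H\"older and the already-established \eqref{capbound-AP} (or directly $\int_\Om|\nabla u|^p\lesssim\norm{F}_{L^{p/(p-1)}}^{p/(p-1)}$, which is the trivial energy estimate for \eqref{homoequa}) produces the right-hand side of \eqref{exp-W}.

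I would carry out the steps in this order: (1) recall from \cite{AP1} the weighted/capacitary gradient bound and deduce \eqref{capbound-AP} via Theorem~\ref{CHAR}; (2) justify $e^{\mu|u|}-1\in\wpo$ first for the truncated functions $T_j(u)$, obtain the energy identity with $\sgn(u)(e^{\mu|T_j(u)|}-1)$ as test function, pass to the limit in $j$; (3) run the Young's inequality absorption and the $\mps$-multiplier estimate to close the inequality under the smallness $\mu\leq\mu_0$; (4) conclude \eqref{exp-W} by H\"older. The main obstacle is step~(3): one must be careful that the term controlling $\int_\Om e^{\mu|u|}|F|^{\frac p{p-1}}\,dx$ via \eqref{MA} is applied to a legitimate test function built from $e^{\mu|u|/p}$ and that the constant it produces is exactly $\mu^p\cdot C_0\cdot\norm{|F|^{\frac1{p-1}}}_{\mps}^p$ with the \emph{same} $C_0$ appearing in the definition of $\mu_0$, so that the absorption is genuinely possible rather than circular; handling the boundary term and the $\sgn(u)$ nonsmoothness rigorously (via truncation and monotone convergence) is the other technical point, but it is standard.
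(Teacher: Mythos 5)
Your overall strategy matches the paper's: quote \cite{AP1} (together with the capacitary characterization in Theorem \ref{CHAR}) for the bound \eqref{capbound-AP}, and then run a truncated exponential test-function computation with the $\mps$-multiplier inequality doing the absorption to get \eqref{exp-W}. That part is fine. However, your specific test function does not close the argument as written, and the final ``hence'' is a genuine gap.

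With your test function $w=\sgn(u)(e^{\mu|u|}-1)$ one has $\nabla w = \mu\,e^{\mu|u|}\nabla u$, so coercivity gives a bound on $\Lambda_0\mu\int_\Om e^{\mu|u|}|\nabla u|^p\,dx$. But the quantity you need to control is
\begin{equation*}
\norm{\nabla(e^{\mu|u|}-1)}_{L^p(\Om)}^p \;=\; \mu^p\int_\Om e^{\,p\mu|u|}|\nabla u|^p\,dx,
\end{equation*}
and since $p>1$ one has $e^{p\mu|u|}\geq e^{\mu|u|}$, so a bound on $\int e^{\mu|u|}|\nabla u|^p$ gives you $e^{\mu|u|/p}-1\in\wpo$, \emph{not} $e^{\mu|u|}-1\in\wpo$. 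Thus the claim ``yielding a uniform bound on $\int e^{\mu|u|}|\nabla u|^p\,dx$, hence on $\norm{\nabla(e^{\mu|u|}-1)}_{L^p}$'' is false as stated. The paper avoids this by taking the test function
\begin{equation*}
v_s \;=\; e^{(p-1)\mu|u_s|}\,w_s, \qquad w_s=\sgn(u)\,\frac{e^{\mu|u_s|}-1}{\mu},\quad u_s=T_s(u),
\end{equation*}
for which the exponents combine ($\mu+(p-1)\mu=p\mu$) to give $\nabla v_s=\big[e^{p\mu|u_s|}\nabla u+\delta|w_s|e^{\delta|u_s|}\nabla u\big]\chi_{\{|u|\leq s\}}$, so that after dropping the extra sign-definite term the coercivity produces exactly $\Lambda_0\int|\nabla w_s|^p\,dx=\Lambda_0\int e^{p\mu|u_s|}|\nabla u_s|^p\,dx$, which is the right quantity. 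Your argument can be repaired by replacing $\mu$ with $p\mu$ in the test function (i.e. using $\sgn(u)(e^{p\mu|u_s|}-1)/(p\mu)$), which shifts the threshold $\mu_0$ by a harmless factor of $p$; but as written the step is incorrect, and you should be precise that the constant $C_0$ in $\mu_0$ then absorbs this factor rather than being literally ``the same $C_0$.''

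One smaller point: when you ``estimate $\int e^{\mu|u|}|F|^{p/(p-1)}\,dx$ by the multiplier inequality applied to $\varphi=e^{\mu|u|/p}-1+\text{const},$'' you are using that $e^{\mu|u|/p}-1\in\wpo$, which is what you are trying to prove. The paper keeps the truncation level $s$ throughout precisely so that $w_s$ is manifestly in $\wpo$ and can legitimately be fed into \eqref{MA}; you should do the same (you do mention truncation, but the order of operations should be: fix $s$, run the whole estimate for $w_s$, then let $s\to\infty$).

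=== END REVIEW ===
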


\begin{proof} Let  $u\in \wpo$ be the unique solution of \eqref{homoequa}. In \cite{AP1}, we showed that there exists $\gamma= \gamma(n,p,\Lambda_0, \Lambda_1)>0$
so that under Assumptions 1 and 3, the capacitary bound \eqref{capbound-AP} holds with a constant $C=C(n,p,\Lambda_0, \Lambda_1, {\rm diam}(\Om)/R_{0})$; see \cite[Corollary 1.8]{AP1}. We mention that the proof of  \eqref{capbound-AP} is based on an end-point weighted estimate obtained in \cite[Theorem 1.5]{AP1} and  a lemma of Verbitsky
\cite[Lemma 3.1]{MV}. In fact, the weighted estimate in \cite[Theorem 1.5]{AP1} was originally motivated from our  study of \eqref{basic_pde2}. This is also where we need the \gbmo condition on $\mathcal{A}$ and the $(\gamma,R_0)$-Reifenberg flatness condition on $\Om$.

To verify \eqref{exp-W}, let $T_s$, $s>0$, denote the two-sided truncation operator at level $s$, i.e., 
\begin{equation}\label{trun-op}
T_s(r)=r \ {\rm ~if~} |r|\leq s \quad {\rm and} \quad  T_s(r)= {\rm sign}(r) s \ {\rm ~if~} |r|>s.
\end{equation}

 For $s, \mu>0$ we define
$$u_s= T_s(u) \quad {\rm and} \quad w_s={\rm sign}(u) [e^{\mu |u_s|} -1]/\mu,$$ 
where ${\rm sign}(u)=0$ if $u=0$, ${\rm sign}(u)=1$ if $u>0$, and ${\rm sign}(u)=-1$ if $u<0$.

Note that  we have   $\nabla w_s = e^{\mu |u_s|} \nabla u_s= (e^{\mu |u_s|} \nabla u) \chi_{\{|u|\leq s\}}$ and thus if we let  
$$v_s=e^{ \delta |u_s|} w_s, \qquad \delta=(p-1)\mu,$$
then it holds that 
\begin{eqnarray}\label{nabvs}
\nabla v_s &=& \Big[e^{\de |u_s|} \nabla w_s + \de |w_s| e^{\de |u_s|}  \nabla u \Big]\chi_{\{|u|\leq s\}}\\
&=& \Big[e^{p\mu |u_s|} \nabla u + \de |w_s| e^{\de |u_s|}  \nabla u \Big]\chi_{\{|u|\leq s\}},\nonumber
\end{eqnarray}
since $\mu + \de = p\mu$.

Using $v_s$ as a test function in \eqref{homoequa} and employing \eqref{nabvs},  we get
\begin{eqnarray*}
 \lefteqn{\itl \aa(x,  \nabla u) \cdot \nabla u\, e^{p\mu |u_s|} \chi_{\{|u|\leq s\}} \ dx }\\
  &=&- \itl \de |w_s| e^{\de |u_s|} \aa(x, \nabla u)\cdot \nabla u \chi_{\{|u|\leq s\}} \ dx  + \itl F \cdot \nabla v_s  \ dx. 
\end{eqnarray*}

By \eqref{cond2}-\eqref{cond1} the first term on the right-hand side is nonpositive, and thus we get
\begin{equation}\label{I12}
 \itl \aa(x,  \nabla u) \cdot \nabla u\,  e^{ p\mu |u_s|} \chi_{\{|u|\leq s\}} \ dx \leq \itl F \cdot \nabla v_s  \ dx. 
\end{equation}

Since  $\nabla w_s = e^{\mu |u_s|} \nabla u_s$, using  conditions \eqref{cond2}-\eqref{cond1}, we see that
 \begin{eqnarray} \label{I1}
  \itl \aa(x,  \nabla u) \cdot \nabla u \, e^{ p\mu |u_s|} \chi_{\{|u|\leq s\}} \ dx & \geq& \Lambda_0 \itl  |\nabla u|^p e^{ p\mu |u_s|} \chi_{\{|u|\leq s\}} \ dx \\
& =& \Lambda_0 \itl |\nabla w_s|^p \ dx. \nonumber
 \end{eqnarray}

On the other hand, as  $v_s=e^{\delta |u_s|} w_s=(1+\mu |w_s|)^{p-1} w_s$ we  have
\begin{eqnarray*} 
 \itl F \cdot \nabla v_s  \ dx &=& \itl F \cdot \nabla[(1+\mu |w_s|)^{p-1} w_s] \ dx \\
&=&  \itl F \cdot [(p-1)(1+\mu |w_s|)^{p-2}\nabla w_s \, {\rm sign}(w_s)\, \mu  w_s] dx+ \nonumber\\
&& +\, \itl F \cdot [(1+\mu |w_s|)^{p-1} \nabla w_s] dx \nonumber\\
&\leq & p\itl |F|  (1+\mu |w_s|)^{p-1} |\nabla w_s| dx.\nonumber
\end{eqnarray*}

Using the inequality
$$(1+\mu |w_s|)^{p-1}\leq 2\mu^{p-1}|w_s|^{p-1} + C(p),$$ 
 and H\"older's inequality in the above bound  we then have

\begin{eqnarray*}
\itl F \cdot \nabla v_s  \ dx &\leq& 2  \mu^{p-1} \, p \itl |F|   |w_s|^{p-1} |\nabla w_s|dx +  C(p) \norm{F}_{L^{\frac{p}{p-1}}(\Om)} \norm{\nabla w_s}_{L^{p}(\Om)}\\
&\leq& 2  \mu^{p-1} \, p \left(\itl |F|^{\frac{p}{p-1}}   |w_s|^{p} dx\right)^{\frac{p-1}{p}} \norm{\nabla w_s}_{L^{p}(\Om)}\\
&&  +\,   C(p) \norm{F}_{L^{\frac{p}{p-1}}(\Om)} \norm{\nabla w_s}_{L^{p}(\Om)}.
\end{eqnarray*}

Note that by assumption $|F|^{\frac{1}{p-1}}\in \mps$, Theorem \ref{CHAR}, and Poincar\'e's inequality we have 
\begin{eqnarray*}
\left(\itl |F|^{\frac{p}{p-1}}   |w_s|^{p} dx\right)^{\frac{p-1}{p}} &\leq& C \norm{|F|^{\frac{1}{p-1}}}_{\mps}^{p-1} \left[\int_{\Om}(|\nabla w_s|^p +|w_s|^p)dx\right]  ^{\frac{p-1}{p}}\\
&\leq&  C(n,p, {\rm diam}(\Om)) \norm{|F|^{\frac{1}{p-1}}}_{\mps}^{p-1} \Big(\int_{\Om}|\nabla w_s|^pdx \Big)^{\frac{p-1}{p}}.
\end{eqnarray*}

 This gives
\begin{eqnarray}\label{I2}
\itl F \cdot \nabla v_s  \ dx &\leq&  C_1  \mu^{p-1}  \norm{|F|^{\frac{1}{p-1}}}_{\mps}^{p-1} \norm{\nabla w_s}_{L^{p}(\Om)}^{p}\\
&&  +\,    C(p) \norm{F}_{L^{\frac{p}{p-1}}(\Om)} \norm{\nabla w_s}_{L^{p}(\Om)},\nonumber
\end{eqnarray}
where $C_1=C_1(n,p, {\rm diam}(\Om))$. At this point we combine estimates \eqref{I1} and \eqref{I2} in equality \eqref{I12} to obtain the following bound
\begin{equation*}
\left(\Lambda_0-C_1  \mu^{p-1}  \norm{|F|^{\frac{1}{p-1}}}_{\mps}^{p-1} \right) \norm{\nabla w_s}^{p-1}_{L^{p}(\Om)}\leq   C(p) \norm{F}_{L^{\frac{p}{p-1}}(\Om)}.
\end{equation*}

This gives 
\begin{equation*}
\norm{\nabla w_s}_{L^{p}(\Om)}\leq     C(p) (\Lambda_0/2)^{\frac{-1}{p-1}} \norm{F}_{L^{\frac{p}{p-1}}(\Om)}^{\frac{1}{p-1}},
\end{equation*}
provided $$\mu\leq\left(\frac{\Lambda_0}{2C_1}\right)^{\frac{1}{p-1}} \norm{|F|^{\frac{1}{p-1}}}_{\mps}^{-1}.$$

Finally,  letting $s\nearrow +\infty$ we obtain the desiblack estimate in $W^{1,p}_0(\Om)$ for $e^{\mu|u|}-1$.
\end{proof}

The following convergence result, shown in \cite{BM},  will be  important for us in the proof of Theorem \ref{main2}(ii). 
 \begin{theorem}[\cite{BM}]
\label{boccardo_murat}
 Suppose that $\mathcal{A}$ satisfies \eqref{cond2}-\eqref{cond1}. For each $k>0$, let $w_k\in W^{1,p}(\Om)$ be a solution to the equation 
$$-\dv \aa(x,\nabla w_k) =   m_k + h_k  \quad \text{~in~} \mathcal{D}'(\Om),$$
and assume  that 
\begin{itemize}
 \item $w_k \rightarrow w$ weakly in $W^{1,p}(\Om)$, strongly in $L^p_{\rm loc}(\Om)$, and a.e. in $\Om$;
\item $h_k \rightarrow h$ in $(W_{0}^{1,p}(\Om))^{*}$;
\item $m_k$ is bounded in the space of finite Radon measures in $\Om$. That is, 
$|\langle m_k, \phi \rangle| \leq C_K \|\phi\|_{L^{\infty}(\Omega)}$ for all $\phi \in C_c^{\infty}(\Omega)$ with $\mathrm{spt}(\phi) \subset K$, where $C_K$ depends on $K$ but not on $k$.  
\end{itemize}

Then it holds that  $\nabla w_k \rightarrow \nabla w$ in $L^q(\Omega)$ for all $q<p$, and thus up to a subsequence  $\nabla w_k \rightarrow \nabla w$ a.e. in $\Om$.
\end{theorem}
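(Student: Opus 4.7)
My plan is to reduce the theorem to proving $\nabla w_k \to \nabla w$ a.e. in $\Om$ along a subsequence. Once that is in hand, the $L^q(\Om)$ convergence for each $q<p$ follows immediately from Vitali's convergence theorem, since the uniform $L^p$ bound on $\nabla w_k$ (coming from the weak $W^{1,p}$ convergence) makes the family $\{|\nabla w_k|^q\}_k$ equi--integrable for every $q<p$.

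To produce the a.e.\ convergence I would follow the classical Boccardo--Murat strategy. Fix a cutoff $\phi \in C_c^\infty(\Om)$ with $0\le\phi\le 1$, and for each $h>0$ use the test function $\varphi_{k,h}:=\phi\, T_h(w_k-w)$ in the equation for $w_k$, where $T_h$ is the truncation in \eqref{trun-op}. Since $w_k\to w$ a.e.\ and strongly in $L^p_{\rm loc}(\Om)$, $\varphi_{k,h}\in \wpo$ is uniformly bounded by $h\|\phi\|_\infty$ and converges to $0$ weakly in $\wpo$. Testing yields
\begin{equation*}
\itl \phi\, \aa(x,\nabla w_k)\cdot \nabla T_h(w_k-w)\, dx + \itl T_h(w_k-w)\, \aa(x,\nabla w_k)\cdot \nabla\phi\, dx = \langle m_k,\varphi_{k,h}\rangle+\langle h_k,\varphi_{k,h}\rangle.
\end{equation*}
On the right, the measure term is bounded by $C_{{\rm spt}(\phi)}\, h\, \|\phi\|_\infty$ using the hypothesis on $m_k$, while $\langle h_k,\varphi_{k,h}\rangle\to 0$ as $k\to\infty$ (for fixed $h$) by the strong dual convergence of $h_k$ together with the weak $\wpo$ convergence of $\varphi_{k,h}$. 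The second integral on the left tends to $0$, because $\aa(x,\nabla w_k)$ is bounded in $L^{p'}$ (hence weakly convergent to some $\chi\in L^{p'}$ along a subsequence) while $T_h(w_k-w)\to 0$ strongly in $L^p_{\rm loc}$. Subtracting $\itl \phi\, \aa(x,\nabla w)\cdot \nabla T_h(w_k-w)\, dx$ from both sides (a quantity that also vanishes in the limit, since $\nabla T_h(w_k-w)\rightharpoonup 0$ in $L^p$ and $\aa(x,\nabla w)\in L^{p'}$), I obtain
\begin{equation*}
\limsup_{k\to\infty}\, \int_{\{|w_k-w|\le h\}} \phi\, \langle \aa(x,\nabla w_k)-\aa(x,\nabla w),\, \nabla w_k-\nabla w\rangle\, dx \;\le\; C h.
\end{equation*}

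The integrand here is nonnegative by the monotonicity inequality \eqref{cond2}; a standard algebraic consequence of \eqref{cond2} (treating $p\ge 2$ and $1<p<2$ separately, with a H\"older step in the degenerate range) then upgrades this to quantitative control of $\int_{\{|w_k-w|\le h\}} \phi\, |\nabla w_k-\nabla w|^\alpha\, dx$ for a suitable $\alpha>0$. Sending $k\to\infty$ first and then $h\to 0$, and using $|\{|w_k-w|>h\}|\to 0$ from the a.e.\ convergence of $w_k$, I obtain $\nabla w_k\to\nabla w$ in measure on $\{\phi>0\}$. A diagonal choice of cutoffs $\phi_j\nearrow \chi_\Om$ then gives convergence in measure on all of $\Om$, and hence a.e.\ convergence along a further subsequence, completing the argument. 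The main technical obstacle is that, unlike $h_k$, the sequence $m_k$ converges only in the weak-$*$ sense of measures, which forces the truncation level $h$ into the estimate as the $O(h)$ error that must be absorbed before being sent to zero; the interplay between this absorption and the degenerate ellipticity when $p<2$ is the delicate point.
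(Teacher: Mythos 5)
Your argument is essentially the original Boccardo--Murat proof from the cited reference \cite{BM}; the paper invokes this theorem without reproducing a proof, so there is no internal argument to compare against. The key steps---testing with $\phi\, T_h(w_k-w)$, bounding the measure term by $C_{K}\,h\,\|\phi\|_\infty$, passing to the limit in the lower-order and dual-pairing terms, and the H\"older interpolation to handle the degenerate range $1<p<2$ in the monotonicity inequality---are all correct and match the classical strategy. One small imprecision worth fixing in the opening paragraph: a.e.\ convergence of a \emph{single} subsequence plus equi-integrability only yields $L^q$ convergence along that subsequence, not of the full sequence. What your argument actually establishes in its body is convergence of $\nabla w_k$ to $\nabla w$ \emph{in measure} on $\Om$, which together with the uniform $L^p$ bound gives, via Vitali, $L^q(\Om)$ convergence of the whole sequence for every $q<p$, from which a.e.\ convergence of a further subsequence follows. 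So the logical chain should be phrased as: monotonicity estimate $\Rightarrow$ convergence in measure $\Rightarrow$ $L^q$ convergence (Vitali) $\Rightarrow$ a.e.\ convergence along a subsequence.
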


{\color{black}We will also need the following strong convergence result first proved in F. E. Browder \cite{Bro} (see also \cite[Lemma 5]{BMP}).
\begin{lemma}
\label{brow_lemma}
Under  \eqref{cond2}-\eqref{cond1}, assume that the following two hypotheses are satisfied:
\begin{gather*}
u_{\ep} \rightarrow u \quad \text{in} \ W_0^{1,p}(\Om) \text{ weakly  and  a.e.  in } \Om,\\
\int_{\Om} [ \aa(x,  \nabla u_{\ep}) - \aa(x,  \nabla u)] \cdot \nabla (u_{\ep}-u)\ dx  \rightarrow 0.
\end{gather*}
Then it holds that 
\[
u_{\ep} \rightarrow u \quad \text{ in }\ W_0^{1,p}(\Om) \ \text{ strongly}.
\]

\end{lemma}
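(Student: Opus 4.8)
The plan is to reduce the claim to the classical Browder–Minty style monotonicity argument, exploiting the strict monotonicity encoded in \eqref{cond2}. First I would introduce the nonnegative quantity
\[
e_\ep(x) := [\aa(x,\nabla u_\ep) - \aa(x,\nabla u)]\cdot \nabla(u_\ep - u),
\]
which by \eqref{cond2} satisfies $e_\ep(x) \geq \Lambda_0 (|\nabla u_\ep|^2 + |\nabla u|^2)^{\frac{p-2}{2}}|\nabla u_\ep - \nabla u|^2 \geq 0$, and whose integral over $\Om$ tends to $0$ by hypothesis. Hence $e_\ep \to 0$ in $L^1(\Om)$, so along a subsequence $e_\ep \to 0$ a.e. in $\Om$.

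Next I would argue pointwise a.e.: fix $x$ in the full-measure set where $e_\ep(x) \to 0$ and where all relevant a.e. statements hold. The goal is to show $\nabla u_\ep(x) \to \nabla u(x)$. This is a standard consequence of strict monotonicity once one knows $\{\nabla u_\ep(x)\}$ is bounded. Boundedness follows from \eqref{cond1} and \eqref{cond2}: expanding $e_\ep(x)$ and using $|\aa(x,\xi)|\leq \Lambda_1|\xi|^{p-1}$ together with the coercive lower bound yields, for $|\nabla u_\ep(x)|$ large, a contradiction unless $|\nabla u_\ep(x)|$ stays bounded (the $p>2$ and $1<p<2$ cases are handled by the elementary inequalities relating $(|\xi|^2+|\eta|^2)^{\frac{p-2}{2}}|\xi-\eta|^2$ to $|\xi-\eta|^p$, respectively to $\frac{|\xi-\eta|^2}{(|\xi|+|\eta|)^{2-p}}$). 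Once bounded, any subsequential limit $\zeta$ of $\nabla u_\ep(x)$ satisfies $[\aa(x,\zeta)-\aa(x,\nabla u(x))]\cdot(\zeta - \nabla u(x)) = 0$ by continuity of $\aa(x,\cdot)$, which by strict monotonicity \eqref{cond2} forces $\zeta = \nabla u(x)$; since every subsequence has the same limit, $\nabla u_\ep(x) \to \nabla u(x)$. Therefore $\nabla u_\ep \to \nabla u$ a.e. in $\Om$ (along the extracted subsequence).

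Then I would upgrade a.e. convergence to strong $L^p$ convergence via a Vitali / equi-integrability argument. One convenient route: use the generalized dominated convergence theorem. From $e_\ep \to 0$ in $L^1$ and the elementary coercivity inequalities, one deduces that $|\nabla u_\ep - \nabla u|^p$ is controlled by $e_\ep$ plus a term that is $L^1$-convergent (when $p \geq 2$, directly $|\nabla u_\ep - \nabla u|^p \leq C\, \Lambda_0^{-1} e_\ep$; when $p < 2$, apply Hölder to $|\nabla u_\ep-\nabla u|^p = \big(\tfrac{|\nabla u_\ep-\nabla u|^2}{(|\nabla u_\ep|+|\nabla u|)^{2-p}}\big)^{p/2}(|\nabla u_\ep|+|\nabla u|)^{\frac{p(2-p)}{2}}$, with the first factor bounded by $C e_\ep^{p/2}$ and the second factor bounded in $L^{2/(2-p)}$ since $u_\ep$ is bounded in $W_0^{1,p}$). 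In either case $\{|\nabla u_\ep - \nabla u|^p\}$ is uniformly integrable; combined with a.e. convergence to $0$, Vitali's theorem gives $\int_\Om |\nabla u_\ep - \nabla u|^p\,dx \to 0$, i.e. $u_\ep \to u$ strongly in $W_0^{1,p}(\Om)$.

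Finally, since the limit $u$ is the same for every subsequence, the whole sequence converges strongly, which is the assertion of the lemma. The main obstacle is the case $1 < p < 2$: there the monotonicity inequality \eqref{cond2} does not directly dominate $|\nabla u_\ep - \nabla u|^p$, so the Hölder-splitting step above (and the accompanying uniform $W_0^{1,p}$ bound on $u_\ep$, which is available since $u_\ep \rightharpoonup u$ weakly in $W_0^{1,p}$) is essential; everything else is routine. Alternatively one can simply cite \cite{Bro} or \cite[Lemma 5]{BMP} as the excerpt already does.
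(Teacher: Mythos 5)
Your argument is correct and is the classical Browder--Minty monotonicity proof. The paper itself supplies no proof of this lemma; it simply refers to F.~E.~Browder \cite{Bro} and to \cite[Lemma~5]{BMP}, and the argument you give is precisely the one found there (nonnegativity of the integrand via \eqref{cond2}, a.e.\ convergence of $e_\ep$, pointwise boundedness and identification of subsequential limits of $\nabla u_\ep(x)$ by continuity and strict monotonicity of $\aa(x,\cdot)$, and then an equi-integrability upgrade). One minor streamlining worth noting: in the case $1<p<2$ your H\"older splitting already yields
\[
\int_\Om |\nabla u_\ep - \nabla u|^p\,dx \;\leq\; C\left(\int_\Om e_\ep\,dx\right)^{p/2}\left(\int_\Om(|\nabla u_\ep|+|\nabla u|)^p\,dx\right)^{(2-p)/2}\;\longrightarrow\;0,
\]
and in the case $p\geq 2$ one has $|\nabla u_\ep - \nabla u|^p \leq C\,\Lambda_0^{-1} e_\ep$ pointwise, so in both regimes the strong $W^{1,p}_0(\Om)$ convergence of the \emph{whole} sequence follows directly from $\int_\Om e_\ep\,dx \to 0$; the extraction of an a.e.\ convergent subsequence of gradients and the appeal to Vitali are harmless but not actually needed.
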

}

\section{Equations with general structures and main results}\label{main-AB}

 We are now ready to state the main result of the paper regarding the existence theory  for equation \eqref{basic_pde2}
in the space $\mps$. From our discussion on  $\mps$, we see that Theorem \ref{weakzero} is just a special case of  the following more general result.    

\begin{theorem} \label{main2} {\rm (i)} Suppose that $\aa(x,\xi)$ satisfies the first inequality in \eqref{cond1}, and 
$\mathcal{B}(x, s, \xi)$ satisfies the first inequality \eqref{Bcond}. If 
equation \eqref{basic_pde2} has a solution $u \in W^{1,p}_0(\Om)$ with $|\nabla u| \in \mps$, then there exists a vector field $F$  such that $\sigma = \dv F$ and $|F|^{\frac{1}{p-1}}\in \mps$ satisfying 
the estimate
\begin{equation*}
\norm{|F|^{\frac{1}{p-1}}}_{\mps} \leq C \left\{ \norm{\nabla u}_{\mps} + \norm{\nabla u}_{\mps}^{\frac{p}{p-1}}  + \norm{\nabla u}_{\mps}^{\frac{m}{p-1}}\right\}.
\end{equation*}
\noindent {\rm (ii)} Let $\mathcal{A}, \mathcal{B}$, and $\Om$ satisfy Assumptions 1, 2, and 3. That is, we assume $\mathcal{A}, \mathcal{B}$ satisfy \eqref{cond2}-\eqref{Bcond}; $\mathcal{A}$ is $(\gamma, R)$-BMO; and
 $\Om$ is \gflt for a sufficiently small $\gamma= \gamma(n,p,\Lambda_0, \Lambda_1)>0$.
Suppose that $\sigma= \dv F$ for a vector field $F$ such that $|F|^{\frac{1}{p-1}}\in \mps$. There is a positive number $\lambda_0=\lambda_0(n,p,m, \Lambda_0, \Lambda_1, b_0, b_1, b_2, {\rm diam}(\Om), R_0)>0$ such that    if 
$$\norm{|F|^{\frac{1}{p-1}}}_{\mps}
\leq \lambda_0^{1/p},$$ 
then  equation \eqref{basic_pde2} has a solution  $u \in W^{1,p}_0(\Om)$ with $|\nabla u| \in \mps$ and 
$$\norm{|\nabla u|}_{{\rm M}^{1,p}(\Om)} \leq T_0  \norm{|F|^{\frac{1}{p-1}}}_{{\rm M}^{1,p}(\Om)}.$$
Moreover, $u$ satisfies $e^{\mu |u|}-1\in W^{1,p}_0(\Om)$ for all $\mu \in [0, \mu_0]$ where 
$\mu_0=C \norm{|F|^{\frac{1}{p-1}}}_{{\rm M}^{1,p}(\Om)}^{-1}$, and
\begin{equation}\label{emu-u}
 \norm{e^{\mu |u|}-1}_{W^{1,p}_0(\Om)} \leq    C(p, \Lambda_0)\, \mu \norm{F}_{L^{\frac{p}{p-1}}(\Om)}^{\frac{1}{p-1}}.
 \end{equation}
\end{theorem}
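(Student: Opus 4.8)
\textbf{Proof plan for Theorem \ref{main2}.}

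For part (i), the necessity direction, the plan is to observe that if $u\in W^{1,p}_0(\Om)$ solves \eqref{basic_pde2} with $|\nabla u|\in\mps$, then the right-hand side $\mathcal{B}(x,u,\nabla u)+\sigma$ can be written as $\dv(\mathcal{A}(x,\nabla u)) - \dv(\mathcal{A}(x,\nabla u)) + \mathcal{B}(x,u,\nabla u)+\sigma$, so that it suffices to represent $\sigma = \dv(\mathcal{A}(x,\nabla u)) - \mathcal{B}(x,u,\nabla u)$ appropriately. The vector part $\mathcal{A}(x,\nabla u)$ already satisfies $|\mathcal{A}(x,\nabla u)|^{\frac{1}{p-1}}\le \Lambda_1^{\frac{1}{p-1}}|\nabla u|\in\mps$ by the first inequality in \eqref{cond1}. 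For the scalar term $\mathcal{B}$, I would use $|\mathcal{B}(x,u,\nabla u)|\lesssim |\nabla u|^p+|u|^m$; the function $|\nabla u|^p$ defines a measure absolutely continuous with respect to $\cpt$ with constant controlled by $\norm{\nabla u}_{\mps}^p$ (directly from \eqref{cap}), and $|u|^m$ is controlled via Sobolev/Poincar\'e embedding together with the fact that $|\nabla u|\in\mps$ implies extra integrability of $u$ (this is where the exponents $\frac{p}{p-1}$ and $\frac{m}{p-1}$ in the claimed estimate come from, after raising to the power $\frac{1}{p-1}$). Then Lemma \ref{mutoF0} converts this capacity bound into the desired vector field $F$ with $|F|^{\frac{1}{p-1}}\in\mps$ and the stated norm estimate.

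For part (ii), the existence direction, the plan is the fixed-point scheme outlined in the introduction. First I would fix $k>0$ and study the regularized equation
$$-\dv\mathcal{A}(x,\nabla u) = \frac{\mathcal{B}(x,u,\nabla u)}{1+k^{-1}|\nabla u|^p} + \sigma \quad\text{in }\Om,\qquad u=0\text{ on }\partial\Om.$$
To produce a solution $u_k$ of this with $|\nabla u_k|\in\mps$, I would set up Schauder's fixed point theorem on the closed convex set $\mathcal{S}_R=\{w\in W^{1,p}_0(\Om): |\nabla w|\in\mps,\ \norm{|\nabla w|}_{\mps}\le R\}$ (compact, say, in the $W^{1,p}_0$ topology after verifying closedness in $\mps$), with the map $w\mapsto u$ where $u$ solves the frozen linear-type problem $-\dv\mathcal{A}(x,\nabla u) = \frac{\mathcal{B}(x,w,\nabla w)}{1+k^{-1}|\nabla w|^p}+\dv F$. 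Since the frozen right-hand side is a bounded measure plus $\dv F$, I would write it as $\dv(F+\widetilde F)$ where $\widetilde F$ comes from applying Lemma \ref{mutoF0} to the measure $\frac{\mathcal{B}(x,w,\nabla w)}{1+k^{-1}|\nabla w|^p}$ (whose total mass over compact sets is $\lesssim R^p\cpt(K) + \text{lower order}$), then invoke Lemma \ref{capboundu} to get $\norm{|\nabla u|}_{\mps}\le C(\norm{|F|^{\frac{1}{p-1}}}_{\mps} + R^{\frac{p}{p-1}} + R^{\frac{m}{p-1}})$. Choosing $R$ proportional to $\norm{|F|^{\frac{1}{p-1}}}_{\mps}$ and $\lambda_0$ small enough that $C(\lambda_0^{1/p} + R^{\frac{p}{p-1}}+R^{\frac{m}{p-1}})\le R$ (possible since $\frac{p}{p-1}>1$ and $\frac{m}{p-1}>1$), the map sends $\mathcal{S}_R$ into itself, giving $u_k\in\mathcal{S}_R$ with $\norm{|\nabla u_k|}_{\mps}\le R\le T_0\norm{|F|^{\frac{1}{p-1}}}_{\mps}$ uniformly in $k$. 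The exponential bound \eqref{emu-u} for each $u_k$ follows from the argument of Lemma \ref{capboundu} applied to $-\dv\mathcal{A}(x,\nabla u_k)=\dv(F+\widetilde F_k)$, using the sign condition in \eqref{Bcond} to absorb the zero-order-type term (this is exactly why the second inequality in \eqref{Bcond} is assumed).

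The final and most delicate step is passing to the limit $k\to\infty$. From the uniform bound $\norm{|\nabla u_k|}_{\mps}\le R$ and the uniform exponential estimate, I extract $u_k\rightharpoonup u$ weakly in $W^{1,p}_0(\Om)$, with $|\nabla u|\in\mps$ and $\norm{|\nabla u|}_{\mps}\le T_0\norm{|F|^{\frac{1}{p-1}}}_{\mps}$ by lower semicontinuity (and similarly $e^{\mu|u|}-1\in W^{1,p}_0$ with \eqref{emu-u} by Fatou plus weak lower semicontinuity). To pass to the limit in the equation I first apply Theorem \ref{boccardo_murat} with $m_k=\frac{\mathcal{B}(x,u_k,\nabla u_k)}{1+k^{-1}|\nabla u_k|^p}$ (uniformly bounded in measure on compact sets, by the gradient bound) and $h_k=\sigma$, obtaining $\nabla u_k\to\nabla u$ a.e.\ and in $L^q$ for $q<p$; this identifies the weak limit and in particular shows $\frac{\mathcal{B}(x,u_k,\nabla u_k)}{1+k^{-1}|\nabla u_k|^p}\to\mathcal{B}(x,u,\nabla u)$ a.e. The hard part is upgrading to strong $W^{1,p}_0$ convergence so that $\mathcal{B}(x,u_k,\nabla u_k)\to\mathcal{B}(x,u,\nabla u)$ strongly in $L^1$ (not merely a.e., to handle the natural $|\nabla u_k|^p$ growth) and so that one can test the limit equation against $C_c^\infty(\Om)$. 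Following \cite{FM2,FM3}, I would use truncated exponential test functions of the form $\varphi\,[e^{\delta|T_s(u_k)-T_s(u)|}-1]\,\mathrm{sign}(u_k-u)$ (or the difference $u_k-u$ multiplied by such a factor), exploiting the uniform $W^{1,p}_0$ bound on $e^{\mu|u_k|}-1$ to control the bad term $\int|\nabla u_k|^p|\varphi|\cdot|\cdots|$ via the weighted inequality \eqref{nablau-cond-Rn}-type bound that $|\nabla u_k|\in\mps$ provides; this yields $\int_{\Om}[\mathcal{A}(x,\nabla u_k)-\mathcal{A}(x,\nabla u)]\cdot\nabla(u_k-u)\,dx\to 0$, and then Lemma \ref{brow_lemma} gives $u_k\to u$ strongly in $W^{1,p}_0(\Om)$. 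With strong convergence in hand, the dominated convergence theorem (dominating $|\mathcal{B}(x,u_k,\nabla u_k)|$ by $b_0|\nabla u_k|^p + b_1|u_k|^m$ and using $L^1$-convergence of $|\nabla u_k|^p$ together with the exponential integrability of $u$) lets me pass to the limit in the weak formulation, producing the desired solution $u$ of \eqref{basic_pde2}. The main obstacle throughout is this last compactness argument: making the exponential test function estimates rigorous uniformly in $k$ and $s$ while juggling the natural growth term, the zero-order term, and the boundary behavior encoded by the $\mps$ norm.
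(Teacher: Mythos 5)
Your overall scheme closely mirrors the paper's: part (i) via $F=-\mathcal{A}(x,\nabla u)-F_1$ with Lemma \ref{mutoF0} applied to $\mathcal{B}$, and part (ii) via Schauder's fixed point on a ball in $\mps$, Boccardo--Murat a.e.\ gradient convergence, exponential test functions, and Browder's lemma. Two points deserve attention, one of which is a real gap.

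First, the gap: your choice of topology for Schauder will not work. You set up the fixed-point argument on $\mathcal{S}_R$ ``compact, say, in the $W^{1,p}_0$ topology,'' but $\mathcal{S}_R$ is \emph{not} compact in $W^{1,p}_0(\Om)$ (a bound on $\norm{|\nabla w|}_{\mps}$ gives an $L^p$ bound on gradients but no modulus of continuity in $L^p$, so highly oscillating bump sequences defeat compactness), and pre-compactness of $S(\mathcal{S}_R)$ in $W^{1,p}_0$ would already require the full strong-convergence machinery that constitutes the hardest part of the proof. The paper sidesteps this precisely by endowing $\ee_{T_0}$ with the strictly weaker $W^{1,1}_0(\Om)$ topology (any $W^{1,q}_0$, $q<p$, would do): then $\ee_{T_0}$ is closed and convex, $S$ is continuous, and pre-compactness of $S(\ee_{T_0})$ follows from Theorem \ref{boccardo_murat}, which gives strong $W^{1,q}$ convergence only for $q<p$. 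The paper explicitly flags that $W^{1,p}_0$ cannot be used here; you need to downgrade the topology.

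Second, a smaller point on part (i): your phrase ``Sobolev/Poincar\'e embedding together with the fact that $|\nabla u|\in\mps$ implies extra integrability of $u$'' hides the actual mechanism. Plain Sobolev embedding of $W^{1,p}_0$ does not control $\int_K |u|^m\,dx$ by $\cpt(K)$ for general $m>p-1$. The paper first applies Lemma \ref{capboundu} to $\Delta_p u=\dv(|\nabla u|^{p-2}\nabla u)$ to get $e^{\mu_0|u|}-1\in W^{1,p}_0(\Om)$ with $\mu_0\sim\norm{|\nabla u|}_{\mps}^{-1}$, derives from this an $L^{mm_0}$ bound on $u$ of the right scale, and then combines H\"older with the elementary capacity-volume bound $|K|^{1/\kappa}\lesssim\cpt(K)$. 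This exponential integrability step is what produces the exponent $m/(p-1)$ in the final estimate; a plain Sobolev argument would not. Also, your test function $\varphi\,[e^{\delta|T_s(u_k)-T_s(u)|}-1]\,\mathrm{sign}(u_k-u)$ in the limit passage differs from the paper's $e^{\mu_0|T_j(u_k)|}\psi(T_s(u_k)-T_s(u))$; the extra weight $e^{\mu_0|T_j(u_k)|}$ is what generates the term $-\mu_0\,\mathcal{A}(x,\nabla u_k)\cdot\nabla T_j(u_k)\,\mathrm{sign}(u_k)$ and lets the sign condition in \eqref{Bcond} absorb the natural-growth term via $\mu_0\Lambda_0\geq b_2$. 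Without that factor, I do not see how your test function controls the $b_2|\nabla u_k|^p$ contribution from $\mathcal{H}_k$.
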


We now devote to the proof of Theorem \ref{main2}. We first start with  part (i):

\noindent {\bf Proof of  Theorem \ref{main2}(i)}. Let $u\in \wpo$ be  such that $|\nabla u|\in \mps$. Then for $F=|\nabla u|^{p-2}\nabla u$,  we have
that $u$ solves
\begin{equation*}
\Delta_p u = \dv F  \text{ in } \Omega, \qquad u = 0  \text{ on } \partial \Omega. 
\end{equation*}
 
Thus by Lemma  \ref{capboundu} we have $e^{\mu_0 |u|}-1\in W^{1,p}_0(\Om)$ with
\begin{equation*}
 \norm{e^{\mu_0 |u|}-1}_{W^{1,p}_0(\Om)} \leq    C(p) \mu_0 \norm{F}_{L^{\frac{p}{p-1}}(\Om)}^{\frac{1}{p-1}},
 \end{equation*}
where 
\begin{equation}\label{muzero}
\mu_0=c(n,p,{\rm diam}(\Om)) \norm{|F|^{\frac{1}{p-1}}}_{\mps}^{-1}=c(n,p,{\rm diam}(\Om)) \norm{|\nabla u|}_{\mps}^{-1}.
\end{equation}

Note that 
$$\norm{F}_{L^{\frac{p}{p-1}}(\Om)}^{\frac{1}{p-1}}=\norm{\nabla u}_{L^p(\Om)}\leq C({n,p,\rm diam}(\Om)) \norm{|\nabla u|}_{\mps},$$
and thus  we get
\begin{equation}\label{gradnorm}
 \norm{e^{\mu_0 |u|}-1}_{W^{1,p}_0(\Om)} \leq    C(n,p,{\rm diam}(\Om)).
 \end{equation}

On the other hand, for any $m_0$ such that $m m_0\geq 1$, we have 
$$|\mu_0 u|^{m m_0}\leq \ceil{mm_0}! (e^{\mu_0 |u|}-1),$$
where $\ceil{x}$ denotes the smallest integer larger than or equal to $x$. Hence, by Poincar\'e's inequality and \eqref{gradnorm} we find
$$\int_{\Om} |\mu_0 u|^{m m_0} dx\leq C \norm{e^{\mu_0 |u|}-1}_{W^{1,p}_0(\Om)}\leq C.$$

That is, we have 
\begin{equation}\label{norm-u}
\left(\int_{\Om} |u|^{m m_0} dx\right)^{\frac{1}{m m_0}}\leq  C \mu_0^{-1}\leq C \norm{|\nabla u|}_{\mps}
\end{equation}
by \eqref{muzero}. Moreover, by H\"older's inequality we see that   \eqref{norm-u} in fact holds for all $m_0>0$ with a constant $C=C(n,p,m,m_0, {\rm diam}(\Om))$. 
Thus for $m_0>1$, using H\"older's inequality we get
\begin{equation}\label{Lebesgue-meas}
\int_{K}|u|^m dx \leq  \norm{|u|^m}_{L^{m_0}(\Om)} |K|^{1-\frac{1}{m_0}}\leq C \,\norm{|\nabla u|}_{\mps}^{m} \, |K|^{1-\frac{1}{m_0}}
\end{equation}
for any compact set $K\subset \Om$.

We next define $\kappa=n/(n-p)$ if $1<p<n$ and $\kappa=2$ if $p\geq n$. Then by Sobolev's inequality, for any compact set $K$ we have 
$$|K|^{\frac{1}{\kappa}}\leq \left(\int_{\RR^n} \varphi^{\kappa p} dx\right)^{\frac{1}{\kappa}} \leq C \int_{\RR^n} (|\nabla \varphi|^p +\varphi^p) dx$$
for all $\varphi\in C_c^\infty(\RR^n), 0\leq \varphi\leq 1$ and $\varphi= 1$ on $K$. This gives $|K|^{\frac{1}{\kappa}}\leq C {\rm Cap}_{1,p}(K)$ and thus choosing $m_0$ in \eqref{Lebesgue-meas} so that $1-1/m_{0}=1/\kappa$ we get 
\begin{equation}\label{Lebesgue-meas-cap}
\int_{K}|u|^m dx \leq   C \,\norm{|\nabla u|}_{\mps}^{m} \, {\rm Cap}_{1,p}(K).
\end{equation}


 Now using \eqref{Lebesgue-meas-cap}, the first bound in \eqref{Bcond}, and the fact that $|\nabla u|\in \mps$ we have
\begin{equation}\label{Bux}
\int_{K}|\mathcal{B}(x, u, \nabla u)| dx \leq  \left\{b_0\norm{|\nabla u|}_{\mps}^{p}  + C b_1 \,\norm{|\nabla u|}_{\mps}^{m} \right\} {\rm Cap}_{1,p}(K)
\end{equation}
for all compact sets $K\subset \Om$. Thus by Lemma \ref{mutoF0} we can write $\mathcal{B}(x, u, \nabla u ) = \dv F_1$ 
for a vector field $F_1$ such that 
$$\norm{|F_1|^{\frac{1}{p-1}}}_{\mps}\leq   C\, \norm{|\nabla u|}_{\mps}^{\frac{p}{p-1}} + C\, \norm{|\nabla u|}_{\mps}^{\frac{m}{p-1}}.$$

Now assume in addition that $u$ is a solution of \eqref{basic_pde2}, then we have 
$$\sigma=-\dv \mathcal{A}(x,\nabla u)-\mathcal{B}(x, u, \nabla u) =\dv [-\mathcal{A}(x,\nabla u)-F_1].$$
Thus letting $F=-\mathcal{A}(x,\nabla u)-F_1$ and using the first bound in \eqref{cond1}, we get the desiblack result.

We next prove  part (ii) of Theorem \ref{main2}.

\noindent{\bf Proof of Theorem \ref{main2}(ii).}  
The proof of this part will be carried out in several steps. First, we approximate \eqref{basic_pde2} and then obtain existence and regularity for the approximate equation. Eventually, we will use the regularity and appropriate test functions to pass to the limit. 

We begin by setting, for each $T>0$,   
\begin{equation*}
 \ee_T := \left\{ \phi \in W_0^{1,1}(\Om):  \phi\in  \wpo \text{~and~} \norm{ |\nabla \phi|}_{\mps} \leq T \norm{|F|^{\frac{1}{p-1}}}_{\mps}\right\}.
\end{equation*}

 We shall impose the subset topology from $W_0^{1,1}(\Om)$ on the set $\ee_T$. In fact, we could also use in $E_T$ the strong topology of 
$W_0^{1,q}(\Om)$ for any $1<q<p$. However, there is a  problem with compactness that prevents us from using the natural topology of $W_0^{1,p}(\Om)$ for $E_T$. 

It is easy to see from the definition of $\ee_T$ and Fatou's lemma that $E_T$ is convex and closed under the strong topology of $W_0^{1,1}(\Om)$.

For $k>0$, we now define a function $\mathcal{H}_k(x,s,\xi)$ by letting  
$$\mathcal{H}_k(x,s,\xi):= \frac{\mathcal{B}(x,s,\xi)}{1+ |\mathcal{B}(x,s,\xi)| k^{-1}}.$$
Then $\mathcal{H}_k(x,s,\xi)$ also satisfies \eqref{Bcond} and $|\mathcal{H}_k(x,s,\xi)|\leq \min\{k, |\mathcal{B}(x,s,\xi)|\}$. For each $v\in \ee_T$ and each vector field $F$ such that $|F|^{\frac{1}{p-1}}\in \mps$ we let $S(v)$ denote the unique solution $u\in \wpo$ to the equation 
\begin{equation} \label{approx_two}
-\dv \aa(x,\nabla u) = \mathcal{H}_k(x, v, \nabla v) + \dv \, F  \text{ in } \Omega, \qquad u = 0  \text{ on } \partial \Omega. 
\end{equation}

The map $S: \ee_T\rightarrow W_0^{1,p}(\Om)$ is well defined as the right-hand side of the first equation in \eqref{approx_two} belongs to $(\wpo)^*$. Note that we have not yet made any choice on $T$.
We shall break the proof into three steps.

\noindent {\bf Step 1.} In this step we show that there exists $T_0>0$ independent of $k$ such that whenever 
\begin{equation}\label{F-small}
\norm{|F|^{\frac{1}{p-1}}}_{\mps}\leq 2 \min\left\{ (2T_0)^{-p}, (2T_0)^{\frac{-m}{m-p+1}}\right\}, 
\end{equation}  we have
\begin{equation} \label{Amap}
S: \ee_{T_0} \rightarrow  \ee_{T_0}.
\end{equation}

Indeed, since $|\mathcal{H}_{k}(x, v, \nabla v)|\leq |\mathcal{B}(x, v, \nabla v)|$, by calculations as in the proof of part (i) (see \eqref{Bux}) we have 
$$\int_{K}|\mathcal{H}_k(x, v, \nabla v)| dx \leq  \left\{ b_0\norm{|\nabla v|}_{\mps}^{p} + C b_1\,\norm{|\nabla v|}_{\mps}^{m} \right\} {\rm Cap}_{1,p}(K)$$
for any compact set $K$. Thus by Lemma \ref{mutoF0} 
we can write 
$\mathcal{H}_k(x, v, \nabla v)=\dv F_k$ for a vector field $F_k$ such that 
$$\norm{|F_k|^{\frac{1}{p-1}}}_{\mps}\leq C \left\{\norm{|\nabla v|}_{\mps}^{\frac{p}{p-1}} + \norm{|\nabla v|}_{\mps}^{\frac{m}{p-1}} \right\}.$$

Then from Lemma \ref{capboundu} we find that $|\nabla S(v)|\in \mps$ with 
\begin{eqnarray*}
\norm{|\nabla S(v)|}_{\mps} &\leq& C \, \norm{|F_k + F|^{\frac{1}{p-1}}}_{\mps}\\
&\leq& \overline{C} \left\{\norm{|\nabla v|}_{\mps}^{\frac{p}{p-1}} + \norm{|\nabla v|}_{\mps}^{\frac{m}{p-1}} +  \norm{|F|^{\frac{1}{p-1}}}_{\mps}\right\}
\end{eqnarray*}
for a constant $\overline{C}$ independent of $k$. 

We now choose $T_0=2 \overline{C}$. Then for $v\in \ee_{T_0}$ and $F$ satisfying \eqref{F-small} we have 
\begin{eqnarray*}
\norm{|\nabla S(v)|}_{\mps} &\leq&  \overline{C} \left(T_0^{\frac{p}{p-1}}\norm{|F|^{\frac{1}{p-1}}}_{\mps}^{\frac{1}{p-1}} + T_0^{\frac{m}{p-1}} \norm{|F|^{\frac{1}{p-1}}}_{\mps}^{\frac{m-p+1}{p-1}}\right) \norm{|F|^{\frac{1}{p-1}}}_{\mps}\\
&& +\, \overline{C} \norm{|F|^{\frac{1}{p-1}}}_{\mps}\\
&\leq& 2 \overline{C} \norm{|F|^{\frac{1}{p-1}}}_{\mps}= T_0 \norm{|F|^{\frac{1}{p-1}}}_{\mps}.
\end{eqnarray*}

This gives   $S(v)\in \ee_{T_0}$ and thus \eqref{Amap} follows.

\noindent {\bf Step 2.} We now prove that for each $k>0$ there exists a  solution $u_k\in \ee_{T_0}$ to the approximate equation 
\begin{equation}
 \label{approx_one}
-\dv \aa(x,\nabla u_k) = \mathcal{H}_k(x, u_k, \nabla u_k) + \dv \, F  \text{ in } \Omega, \qquad u_k = 0  \text{ on } \partial \Omega. 
\end{equation}

To that end,  we shall use Schauder's Fixed Point Theorem to obtain a fixed point for the map
$S: \ee_{T_0} \rightarrow \ee_{T_0}$.  
 Since we already know that $\ee_{T_0}$ is closed and convex, it remains to show that 
 $S: \ee_{T_0} \rightarrow \ee_{T_0}$ is continuous and $S(\ee_{T_0})$ is pre-compact (under the strong topology of $W_0^{1,1}(\Om)$).

To prove continuity,  let $\{v_l \}\subset\ee_{T_0}$ be a sequence such that 
$v_l \rightarrow v$ strongly in $W_0^{1,1}(\Om)$. This combined with the fact that $\{S(v_l)\}$ is bounded in $\wpo$, there is a subsequence, also denoted by $\{v_l\}$ for simplicity, and a function $u\in \wpo$ such that:
    
\noindent (a) $S(v_l) \rightarrow u$ weakly in $\V$, strongly in $L^{p}(\Om)$, and a.e. in $\Om$,

\noindent (b) $\{\mathcal{H}_k(x, v_l, \nabla v_l)\}_{l}$ is uniformly bounded in the space of finite
signed measures in $\Om$ for each fixed $k>0$.

Recall that we have 
\begin{equation}\label{Avtou}
-\dv \aa(x,\nabla [S(v_l)]) = \mathcal{H}_k(x, v_l, \nabla v_l) + \dv \, F \quad \text{~in~} \mathcal{D}'(\Omega). 
\end{equation}
Thus by Theorem \ref{boccardo_murat}  we have $S(v_l)\rightarrow u$ in $W^{1,q}_0(\Om)$ for any $1\leq q<p$, and up to another subsequence we have  $\nabla [S(v_l)]\rightarrow \nabla u$ a.e. in $\Om$.

By \eqref{cond1} and Vitali's Convergence Theorem  we  have 
$$\mathcal{A}(x, \nabla[S(v_l)]) \rightarrow \mathcal{A}(x, \nabla u) \quad \text{strongly ~in~} L^{1}(\Om,\RR^n) {\rm ~and ~ weakly~ in~} L^{p/(p-1)}(\Om,\RR^n).$$

Up to another subsequence, it holds that $v_l \rightarrow v$ and $\nabla v_l \rightarrow \nabla v$ a.e. in $\Om$. Thus by  Dominated Convergence Theorem we have 
$$\mathcal{H}_k(x, v_l, \nabla v_l)\rightarrow
\mathcal{H}_k(x, v, \nabla v) \quad \text{~in~} L^{1}(\Om). $$
This is where we use the property that $|\mathcal{H}_k(x, v_l, \nabla v_l)| \leq k$ for all $l$.

Thus we can pass to the limit in equation \eqref{Avtou} to obtain that $u=S(v)$. So far we have found a subsequence of $\{v_{l_{j}}\}$ of $\{v_l\}$ such that $S(v_{l_j})\rightarrow S(v)$ in $W^{1,1}_0(\Om)$.   As the limit is independent of the subsequence it actually holds that the whole sequence  $S(v_{l})\rightarrow S(v)$ in $W^{1,1}_0(\Om)$.  This shows that the map $S:\ee_{T_0} \rightarrow \ee_{T_0}$ is continuous.

To prove pre-compactness, let $\{u_l\}=\{S(v_l)\}$ be a sequence in $S(\ee_{T_0})$, where $v_l\in \ee_{T_0}$. Then as above  there is a subsequence of $\{v_l\}$, also denoted by $\{v_l\}$, and a function $u\in \wpo$ such that properties (a) and (b) above hold.  Thus by Theorem \ref{boccardo_murat} again we have $S(v_l)\rightarrow u$ in $W^{1,1}_0(\Om)$. This shows that the set $S(\ee_{T_0})$ is pre-compact.

\noindent {\bf Step 3.} In this step we further restrict that 
\begin{equation*}
\norm{|F|^{\frac{1}{p-1}}}_{\mps} \leq \lambda_0^{\frac{1}{p}}, 
\end{equation*} 
 where 
$$\lambda_0^{\frac{1}{p}}:= \min\left\{2 \min\left\{ (2T_0)^{-p}, (2T_0)^{\frac{-m}{m-p+1}}\right\}, (b_2/\Lambda_0)^{-1}(\Lambda_0/C_0)^{\frac{1}{p-1}}\right\},$$
and $C_0=C_0(n,p,{\rm diam}(\Om))$ is as in Lemma \ref{capboundu}. Then we have
\begin{equation}\label{muzeroupbound} 
\mu_0=(\Lambda_0/C_0)^{\frac{1}{p-1}} \norm{|F|^{\frac{1}{p-1}}}_{{\rm M}^{1,p}(\Om)}^{-1}\geq b_2/\Lambda_0.
\end{equation} 

Let $u_k$ be as in Step 2. By Lemma \ref{capboundu}, we have $e^{\mu |u_k|}-1\in W^{1,p}_0(\Om)$ for $\mu \in [0, \mu_0]$, with
\begin{equation}\label{exp-W-1}
 \norm{e^{\mu |u_k|}-1}_{W^{1,p}_0(\Om)} \leq    C \mu \norm{F}_{L^{\frac{p}{p-1}}(\Om)}^{\frac{1}{p-1}},
 \end{equation}
where $C=C(p,\Lambda_0)$. Then by  Rellich's compactness theorem, there is a subsequence, still denoted by $\{u_k\}$, such that  
$$u_{k} \xrightharpoonup{k} u \text{~weakly~in~} \wpo,    \text{~strongly~in~} L^p(\Om), \text{~and~a.e.~in~} \Om,$$
 for a function $ u \in \wpo$ such that \eqref{emu-u} holds for all $\mu \in [0, \mu_0]$.

We now claim that 
\begin{equation}\label{strongconv}
u_k\rightarrow u \quad \text{strongly~in~} \wpo \text{~as~} k\nearrow\infty,
\end{equation}
and thus  we can pass to the limit in \eqref{approx_one} to verify that $u$ is  a solution to \eqref{basic_pde2}.

To prove \eqref{strongconv}, we  write
$$\nabla u_k-\nabla u= \nabla T_s(u_k)- \nabla T_s(u) + \nabla G_s(u_k)- \nabla G_s(u),$$
where $s>0$ and $G_s(r):=r-T_s(r)$, $r\in\RR$, and $T_s$ is as defined in \eqref{trun-op}. 
Thus for every $s>0$ we have 
\begin{eqnarray}\label{tailplushead}
\norm{\nabla u_k-\nabla u}_{L^p(\Om)} &\leq& \sup_{k>0}\norm{\nabla G_s(u_k)- \nabla G_s(u)}_{L^p(\Om)}\\
&& + \, \norm{\nabla T_s(u_k)- \nabla T_s(u)}_{L^p(\Om)}.\nonumber
\end{eqnarray}

Note that for $\mu\in (0, \mu_0]$ we find 
 \begin{eqnarray*}
  \itl |\nabla G_s(u_k)|^p \ dx&=&\itl[\{|u_k|>s\}] |\nabla u_k|^p \ dx \\
	&=&  \frac{1}{\mu}\itl[\{|u_k|>s\}] e^{-p\mu|u_k|}|\nabla (e^{\mu |u_k|}-1)|^p \ dx \\
  &\leq& \frac{1}{\mu} e^{-p\mu s} \norm{e^{\mu |u_k|}-1}_{W^{1,p}_0(\Om)}^p.
 \end{eqnarray*}

By \eqref{exp-W-1}, this yields that  
\begin{equation}\label{tail}
\lim_{s\rightarrow\infty}\ \sup_{k>0}\norm{\nabla G_s(u_k)- \nabla G_s(u)}_{L^p(\Om)} =0.
\end{equation}

On the other  hand, by \eqref{muzeroupbound} and Lemma \ref{truncate-conv} below it holds that 

\begin{equation}\label{head}
\lim_{k\rightarrow\infty}\norm{\nabla T_s(u_k)- \nabla T_s(u)}_{L^p(\Om)} =0 \quad \text{for each } s>0.
\end{equation}

Thus combining \eqref{tailplushead}, \eqref{tail}, and \eqref{head} we obtain  convergence \eqref{strongconv} as desiblack. 
This completes the proof of the theorem.

We are now left with the proof of the following lemma.
\begin{lemma} \label{truncate-conv}Let $\mathcal{A}, \mathcal{B}$ satisfy \eqref{cond2}-\eqref{Bcond} and let $F$ be a vector field in $L^{\frac{p}{p-1}}(\Om,\RR^n)$.  For each $k>0$, 
let $u_k\in W^{1,p}_0(\Om)$ be a solution of \eqref{approx_one}. Suppose that there exists $\mu_0 \geq b_2/\Lambda_0$ such that $\{e^{\mu_0|u_k|}-1\}$ is uniformly bounded in  $W^{1,p}_0(\Om)$. Suppose also that  
$$u_{k} \xrightharpoonup{k} u \text{~weakly~in~} \wpo,    \text{~strongly~in~} L^p(\Om), \text{~and~a.e.~in~} \Om,$$
 for a function $ u \in \wpo$.
Then we have the convergence   \eqref{head} for any $s>0$.
\end{lemma}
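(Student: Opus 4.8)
The plan is to prove \eqref{head} by testing the equation \eqref{approx_one} (for $u_k$) and the limit equation with a carefully chosen test function built from the truncation $w_k := T_s(u_k) - T_s(u)$, weighted by an exponential factor $\exp(\lambda |w_k|)$ or $\exp(\lambda |w_k|^2)$ to absorb the bad gradient term of natural growth. This is the classical Boccardo--Murat--Puel device. First I would record that, since $\{e^{\mu_0|u_k|}-1\}$ is bounded in $W^{1,p}_0(\Om)$ and $u_k \to u$ a.e., we get $e^{\mu_0|u_k|}-1 \rightharpoonup e^{\mu_0|u|}-1$ weakly in $W^{1,p}_0(\Om)$, hence also $e^{\mu_0|u|}-1 \in W^{1,p}_0(\Om)$; in particular the truncated exponentials $\min\{e^{\mu_0|u_k|},M\}$ converge weakly and the sets $\{|u_k|>s\}$ are controlled. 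I would also note $\nabla T_s(u_k) \rightharpoonup \nabla T_s(u)$ weakly in $L^p(\Om)$, so it suffices to show
\[
\limsup_{k\to\infty} \int_{\Om} [\aa(x,\nabla T_s(u_k)) - \aa(x,\nabla T_s(u))]\cdot (\nabla T_s(u_k)-\nabla T_s(u))\,dx \le 0,
\]
after which the monotonicity \eqref{cond2} and a Browder-type argument as in Lemma \ref{brow_lemma} (applied to the truncated functions) upgrade this to strong convergence \eqref{head}.

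The core computation is to plug $\phi_k := \psi(w_k)$ into the weak formulation of \eqref{approx_one}, where $\psi(t) = t\exp(\beta t^2)$ with $\beta > (b_2/(2\Lambda_0))^2$ chosen so that $\Lambda_0 \psi'(t) - b_2|\psi(t)| \ge \Lambda_0/2$ for all $t$ — this is exactly where the hypothesis $\mu_0 \ge b_2/\Lambda_0$ (equivalently the sign condition $\mathcal{B}\,{\rm sign}(s)\le b_2|\xi|^p$) gets used, to dominate the natural-growth term $\int \mathcal{H}_k(x,u_k,\nabla u_k)\psi(w_k)\,dx$ by $b_2\int |\nabla u_k|^p |\psi(w_k)|\,dx$ and then split $|\nabla u_k|^p$ on the set $\{|u_k|\le s\}$ (where it equals $|\nabla T_s(u_k)|^p \lesssim |\nabla w_k|^p + |\nabla T_s(u)|^p$) and on $\{|u_k|>s\}$ (where it is handled by \eqref{exp-W-1}-type tail smallness). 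The principal-part term splits as $\int \aa(x,\nabla u_k)\cdot\nabla w_k\,\psi'(w_k)\,dx$; one writes $\nabla u_k = \nabla T_s(u_k)$ on $\{|u_k|\le s\}$ and discards/controls the $\{|u_k|>s\}$ piece using the uniform bound on $\aa(x,\nabla u_k)$ in $L^{p/(p-1)}$ together with $\psi'(w_k)$ staying bounded and $\chi_{\{|u_k|>s\}}$ being small in the appropriate sense (weak $L^p$ convergence of $\nabla u_k$ against the strongly convergent-to-zero-tail factor). The right-hand side contribution $\int F\cdot \nabla(\psi(w_k))\,dx = \int F\cdot \nabla w_k\,\psi'(w_k)\,dx \to 0$ because $\nabla w_k \rightharpoonup 0$ weakly in $L^p(\Om)$ and $F\psi'(w_k) \to F\psi'(0) = F$ strongly in $L^{p/(p-1)}(\Om)$ by dominated convergence (using $w_k \to 0$ a.e. and boundedness of $\psi'$).

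After these substitutions, one collects terms to arrive at
\[
\frac{\Lambda_0}{2}\int_{\Om}[\aa(x,\nabla T_s(u_k))-\aa(x,\nabla T_s(u))]\cdot\nabla w_k\,dx \le \varepsilon_k + \omega(s),
\]
where $\varepsilon_k \to 0$ as $k\to\infty$ (terms involving $\nabla w_k \rightharpoonup 0$ tested against strongly convergent quantities, plus the weak convergence $\aa(x,\nabla T_s(u_k))\cdot\nabla T_s(u) \rightharpoonup \aa(x,\nabla T_s(u))\cdot \nabla T_s(u)$ which follows from $\aa(x,\nabla T_s(u_k)) \rightharpoonup \aa(x,\nabla T_s(u))$ in $L^{p/(p-1)}$ — itself a consequence of Theorem \ref{boccardo_murat} giving $\nabla u_k \to \nabla u$ a.e. and equi-integrability), and $\omega(s)$ is a tail term that does \emph{not} go to zero with $k$ but which we do not need: actually, by choosing the exponential weight correctly, the $\{|u_k|>s\}$ contributions can be absorbed or bounded by $C\sup_k \|\nabla G_s(u_k)\|_{L^p}^p \to 0$ as $s\to\infty$ in the manner of \eqref{tail}, so for each fixed $s$ one still gets $\limsup_k \le 0$ for the truncated energies, which is all that \eqref{head} requires. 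I expect the main obstacle to be the bookkeeping on $\{|u_k|>s\}$: one must verify that the principal part integrated against $\psi'(w_k)\chi_{\{|u_k|>s\}}$ is genuinely negligible (not merely small as $s\to\infty$), which needs the uniform $L^{p/(p-1)}$ bound on $\aa(x,\nabla u_k)$ combined with the fact that $\nabla w_k \rightharpoonup 0$ weakly while $\psi'(w_k)\chi_{\{|u_k|>s\}}$ is bounded — so this product integrates to zero in the limit $k\to\infty$ for fixed $s$ — together with a uniform (in $k$) smallness of the natural-growth tail term coming directly from \eqref{exp-W-1}. Once the key limit inequality is in hand, Lemma \ref{brow_lemma} applied with $u_\varepsilon \leftsquigarrow T_s(u_k)$, $u \leftsquigarrow T_s(u)$ closes the proof of \eqref{head}.
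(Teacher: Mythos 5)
Your overall plan is the right one in outline (Boccardo--Murat--Puel test functions, monotonicity, Browder's lemma), and several of your observations are exactly what the paper does: the principal-part tail $\int_{\{|u_k|>s\}}\aa(x,\nabla u_k)\cdot(-\nabla T_s(u))\cdots$ does vanish as $k\to\infty$ for fixed $s$ by weak/strong convergence, and the $F$-term vanishes by $\nabla z_k\rightharpoonup 0$ and dominated convergence. But the key step is handled incorrectly, and the gap is genuine.

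You test with $\psi(w_k)$, $w_k=T_s(u_k)-T_s(u)$, $\psi(t)=te^{\beta t^2}$. Because $|w_k|\le 2s$, the weight $\psi(w_k)$ (and $\psi'(w_k)$) is merely bounded; it does not decay on $\{|u_k|>s\}$. Consequently the natural-growth tail you must control,
\[
\int_{\{|u_k|>s\}}\mathcal{H}_k(x,u_k,\nabla u_k)\,\psi(w_k)\,dx
\ \le\ b_2\int_{\{|u_k|>s\}}|\nabla u_k|^p\,|\psi(w_k)|\,dx
\ \le\ C(\psi,s)\int_{\{|u_k|>s\}}|\nabla u_k|^p\,dx,
\]
is only small \emph{uniformly in $k$ as $s\to\infty$} (via \eqref{exp-W-1}); it does \emph{not} tend to $0$ as $k\to\infty$ for fixed $s$. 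You acknowledge this in the penultimate sentence, and then the claim ``so for each fixed $s$ one still gets $\limsup_k\le 0$'' is a non sequitur: you actually get $\limsup_k\int D_k\,dx\le\omega(s)$ with $\omega(s)\to 0$ only as $s\to\infty$, which is weaker than what \eqref{head} asserts and is not enough to invoke Lemma~\ref{brow_lemma} for $T_s(u_k)$ versus $T_s(u)$.

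The device you are missing is a \emph{second truncation parameter}. The paper tests with
\[
v_k \;=\; e^{\mu_0|T_j(u_k)|}\,\psi(z_k),\qquad z_k=T_s(u_k)-T_s(u),\quad j\ge s,
\]
so that the extra factor $e^{\mu_0|T_j(u_k)|}$, when differentiated, produces the good term $-\mu_0\,\aa(x,\nabla u_k)\cdot\nabla T_j(u_k)\,{\rm sign}(u_k)\,e^{\mu_0|T_j(u_k)|}\psi(z_k)$. On $\{|u_k|>s\}$ one has ${\rm sign}(u_k)\psi(z_k)\ge 0$, and by ellipticity this good term dominates the bad term $b_2|\nabla u_k|^p\,{\rm sign}(u_k)\,e^{\mu_0|T_j(u_k)|}\psi(z_k)$ on $\{s<|u_k|\le j\}$ precisely because $\mu_0\Lambda_0\ge b_2$. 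This is where the hypothesis $\mu_0\ge b_2/\Lambda_0$ is used (not, as you suggest, inside the choice of $\beta$). The only surviving natural-growth tail is then over $\{|u_k|>j\}$, and using the uniform $W^{1,p}_0$ bound on $e^{\mu_0|u_k|}-1$ one gets $I_4^1\le C\,e^{(1-p)\mu_0 j}$ \emph{uniformly in $k$}, which tends to $0$ as $j\to\infty$. Fixing $\varepsilon>0$, one chooses $j=j(\varepsilon)$ large, then sends $k\to\infty$ for that fixed $j$, concluding $\limsup_k\int D_k\,dx\le\varepsilon$ and hence $\int D_k\,dx\to 0$, from which Lemma~\ref{brow_lemma} gives \eqref{head}. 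Correspondingly, the auxiliary $\psi$ must now absorb the $b_0|\nabla u_k|^p$ growth on $\{|u_k|\le s\}$ \emph{together with} the extra $\mu_0\Lambda_1|\nabla u_k|^p$ contributed by differentiating the exponential weight, which is why the paper requires $\psi'-\tfrac{b_0+\Lambda_1\mu_0}{\Lambda_0}|\psi|\ge 1$ rather than a constant depending on $b_2$.

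One last remark: your weaker bound $\limsup_k\|\nabla T_s(u_k)-\nabla T_s(u)\|_{L^p}\le\omega(s)^{1/p}$ (for $p\ge 2$; for $1<p<2$ more work is needed to pass from $\int D_k$ to the $L^p$-norm) would in fact suffice for the final limiting argument in Step~3 of Theorem~\ref{main2}(ii) via \eqref{tailplushead}. But it does not prove Lemma~\ref{truncate-conv} as stated, which requires the limit \eqref{head} for each fixed $s$, and it would also force you to replace the clean Browder-lemma step by an $\varepsilon/\omega(s)$ bookkeeping. Inserting the weight $e^{\mu_0|T_j(u_k)|}$ and the parameter $j$ is the cleaner and the intended route.
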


\begin{proof} Let $s>0$ be fixed. For any $j\geq s$ we define   $$ v_k = \exptk \psi(z_k), 
$$ where $z_k = T_s(u_k) - T_s(u)$ and $\psi$ is a $C^1$ and increasing function from $\RR$ to $\RR$ satisfying 
\begin{equation}\label{psicond}
\psi(0) = 0 \quad \text{and} \quad \psi' - \frac{b_0+  \Lambda_1 \mu_0}{\Lambda_0} |\psi| \geq 1.
\end{equation}

As in \cite{FM2} (see also  \cite{BBM, FM3}), using $v_k$ test function in \eqref{approx_one}, we  have 
\begin{eqnarray*}
  \lefteqn{\itl \axgrad{\nabla u_k} \cdot \exptk \psi'(z_k) \nabla z_k \dx}\\
	&=& \itl \Big[\mathcal{H}_k(x, u_k, \nabla u_k)  - \mu_0 \mathcal{A}(x,\nabla u_k) \cdot \nabla T_j(u_k) {\rm sign}(u_k)\Big]  \exptk \psi(z_k) \dx  \\
&& + \ \int_{\Om} F\cdot \nabla[ \exptk \psi(z_k)] dx.  
\end{eqnarray*}

Note that the term on the left-hand side  in the above equality can be written as
\begin{eqnarray*}
  \lefteqn{\itl \axgrad{\nabla u_k} \cdot  (\nabla T_s(u_k) - \nabla T_s(u)) \exptk \psi'(z_k)\dx}  \\
& =& \itl[\{\abs{u_k}\leq s\}] \{\axgrad{\nabla T_s(u_k)} - \axgrad{\nabla T_s (u)} \}\cdot \\
&& \qquad \qquad \qquad \cdot\,  (\nabla T_s(u_k) - \nabla T_s(u)) \exptk \psi'(z_k)\dx \\
& &\quad + \itl[\{\abs{u_k}\leq s\}]\axgrad{\nabla T_s (u)} \cdot  (\nabla T_s(u_k) - \nabla T_s(u)) \exptk \psi'(z_k)\dx \\
& &\quad + \itl[\{\abs{u_k}>s\}] \axgrad{\nabla u_k} \cdot  (- \nabla T_s(u)) \exptk \psi'(z_k)\dx.
\end{eqnarray*}

Thus combining the last two equalities we obtain
\begin{equation}\label{First5}
I_1 - I_4=-I_2-I_3  +I_5,
\end{equation}
where we define that 
\begin{eqnarray*}
I_1&=&\itl[\{\abs{u_k}\leq s\}] \{\axgrad{\nabla T_s(u_k)} - \axgrad{\nabla T_s (u)} \}\cdot\\
&& \qquad \qquad \qquad \cdot\, (\nabla T_s(u_k) - \nabla T_s(u)) \exptk \psi'(z_k)\dx,
\end{eqnarray*}
$$I_2=\itl[\{\abs{u_k}\leq s\}]\axgrad{\nabla T_s (u)} \cdot  (\nabla T_s(u_k) - \nabla T_s(u)) \exptk \psi'(z_k)\dx,$$
$$I_3=\itl[\{\abs{u_k}>s\}] \axgrad{\nabla u_k} \cdot  (- \nabla T_s(u)) \exptk \psi'(z_k)\dx,$$
$$I_4=\itl \left[\mathcal{H}_k(x, u_k, \nabla u_k)  - \mu_0 \mathcal{A}(x, \nabla u_k) \cdot \nabla T_j(u_k) {\rm sign}(u_k)\right]  \exptk \psi(z_k) \dx,$$
and 
$$I_5=\int_{\Om} F\cdot \nabla[ \exptk \psi(z_k)] dx.$$

We further have 
\begin{equation}\label{I2-I4}
I_1-I_4=I_1-I_4^1-I_4^2,
\end{equation}
where 
$$I_4^1:= \itl[\{\abs{u_k}> s\}] \{ \dots\}\, dx, \qquad \qquad I_4^2:=\itl[\{\abs{u_k}\leq s\}] \{ \dots\}\, dx,$$
with $\{\dots\}$ being the integrand in $I_4$.


Since   $\abs{\nabla T_j(u_k)}\leq  \abs{\nabla u_k}$,   using \eqref{cond2}-\eqref{Bcond}  we get
\begin{eqnarray*}
|I_4^2| &\leq&     \itl[\{\abs{u_k}\leq s\}] \Big(b_0 \abs{\nabla u_k}^p + b_1 |u_k|^m  + \mu_0 \Lambda_1 \abs{\nabla u_k}^{p} \Big)    \exptk\  \abs{\psi(z_k)} \   dx \\
&\leq& \frac{b_0 + \mu_0 \Lambda_1}{\Lambda_0}  \itl[\{\abs{u_k}\leq s\}] \mathcal{A}(x, \nabla T_s(u_k)) \cdot \nabla T_s(u_k)   \exptk\  \abs{\psi(z_k)} \   dx \\
&& +\  \itl[\{\abs{u_k}\leq s\}] b_1 s^m   e^{\mu_0 s}\  \abs{\psi(z_k)}\    \dx,
\end{eqnarray*}

 Thus, with $M=(b_0 + \mu_0 \Lambda_1)/\Lambda_0$,
we find that 
\begin{eqnarray*} 
\qquad |I_4^2|   & \leq & M \itl[\{\abs{u_k}\leq s\}]  \left[ \axgrad{\nabla T_s(u_k)} -\axgrad{\nabla T_s(u)} \right] \cdot \\
&& \qquad \qquad \qquad \cdot \left[ \nabla T_s(u_k) - \nabla T_s(u) \right]    \exptk\  \abs{\psi(z_k)}  dx \nonumber\\
&& + \  M\itl[\{\abs{u_k}\leq s\}]\axgrad{\nabla T_s(u)} \cdot \left[ \nabla T_s(u_k) - \nabla T_s(u) \right] \exptk\  \abs{\psi(z_k)}    dx \nonumber\\
&&  + \   M\itl[\{\abs{u_k}\leq s\}]\axgrad{\nabla T_s(u_k)} \cdot \nabla T_s(u)  \exptk\  \abs{\psi(z_k)}   dx\nonumber \\
&& +\   \itl[\{\abs{u_k}\leq s\}] b_1 s^m   e^{\mu_0 s}\  \abs{\psi(z_k)}   dx.\nonumber
\end{eqnarray*}

On the other hand, using  the inequalities $$\axgrad{\nabla u_k}\cdot  \nabla T_j(u_k) \geq \Lambda_0 \abs{\nabla u_k}^p \chi_{\{\abs{u_k}\leq j\}}, \quad \chi_{\{\abs{u_k}>s\}} {\rm sign}(u_k) \psi(z_k) \geq 0,$$ and  \eqref{Bcond}, we have
%
\begin{eqnarray*}
 I_4^1 &=& \itl[\{\abs{u_k}>s\}] \left[ {\rm sign}(u_k) \mathcal{H}_k(x,u_k, \nabla u_k) -\mu_0 \axgrad{\nabla u_k} \cdot \nabla T_j(u_k) \right] \times \\
&& \qquad \qquad \qquad \qquad \times \ {\rm sign}(u_k) 
\exptk \psi(z_k) \dx \\
&\leq& \itl[\{\abs{u_k}>s\}] \left[ b_2 \abs{\nabla u_k}^p -\mu_0 \Lambda_0\abs{\nabla u_k}^{p} \chi_{\{\abs{u_k}\leq j\}} \right] {\rm sign}(u_k) \exptk \psi(z_k) \dx.
\end{eqnarray*}

 Thus since $\mu_0 \Lambda_0\geq b_2$ and $j\geq s$,   we get
\begin{eqnarray*}
 I_4^1  & \leq & \itl[\{\abs{u_k}>j\}] b_2 \abs{\nabla u_k}^p  {\rm sign}(u_k) \exptk \psi(z_k) \dx\\
& \leq & b_2 \max_{r\in[-2s,2s]}|\psi(r)| \  e^{\mu_0 j}\itl[\{\abs{u_k}>j\}]  \abs{\nabla u_k}^p   \dx\nonumber\\
&\leq& C e^{\mu_0 j}\, \frac{1}{\mu_0} \, \itl[\{|u_k|>j\}] e^{-p\mu_0|u_k|}|\nabla (e^{\mu_0 |u_k|}-1)|^p \ dx\nonumber\\
& \leq & C \, \frac{1}{\mu_0}\,   e^{\mu_0 j}  e^{-p\mu_0 j} \norm{e^{\mu_0 |u_k|}-1}_{W^{1,p}_0(\Om)}^p.\nonumber
\end{eqnarray*}

As the sequence $\{e^{\mu_0 |u_k|}-1\}$ is uniformly bounded in  $W^{1,p}_0(\Om)$, this yields  that 
\begin{equation}\label{I-41}
\limsup_{j\rightarrow\infty} \ \sup_{k>0} I_4^1 \leq 0.
\end{equation}

Let $D_k$ be the nonnegative function 
$$D_k:=(\axgrad{\nabla T_s(u_k)} - \axgrad{\nabla T_s (u)} )\cdot  (\nabla T_s(u_k) - \nabla T_s(u)).$$
Then by  \eqref{psicond},
$$\itl[\{\abs{u_k}\leq s\}] D_k \ dx \leq  \itl[\{\abs{u_k}\leq s\}] D_k \, \exptk \, (\psi' - M |\psi|)\ dx.$$

Thus combining this with \eqref{I2-I4}-\eqref{I-41}, we get
\begin{eqnarray*}
\lefteqn{\itl[\{\abs{u_k}\leq s\}] D_k \ dx \leq I_1- I_4 \, + } \nonumber\\
&&   +\   M\itl[\{\abs{u_k}\leq s\}]\axgrad{\nabla T_s(u)} \cdot \left[ \nabla T_s(u_k) - \nabla T_s(u) \right] \exptk\  \abs{\psi(z_k)}    dx \nonumber\\
&&  + \   M\itl[\{\abs{u_k}\leq s\}]\axgrad{\nabla T_s(u_k)} \cdot \nabla T_s(u)  \exptk\  \abs{\psi(z_k)}   dx\nonumber \\
&& +\   \itl[\{\abs{u_k}\leq s\}] b_1 s^m   e^{\mu_0 s}\  \abs{\psi(z_k)}   dx + \varepsilon \nonumber
\end{eqnarray*}
for any $\varepsilon>0$ provided $j=j(\varepsilon)$ is sufficiently large. 

Our next goal is to apply $\limsup_{k\rightarrow\infty}$ to both sides of  the above inequality. To that end, note that $\psi(0)=0$, $z_k \xrightarrow{k} 0$ a.e., $\{\nabla T_s(u_k) - \nabla T_s(u)\}$ and $\{\axgrad{ \nabla T_s(u_k)}\}$ are uniformly bounded in $L^{p}(\Om,\RR^n)$ and in $L^{p/(p-1)}(\Om,\RR^n)$, respectively. Thus by H\"older's inequality and  Dominated Convergence Theorem we find

 
$$\lim_{k\rightarrow\infty} \itl[\{\abs{u_k}\leq s\}]\axgrad{\nabla T_s(u)} \cdot \left[ \nabla T_s(u_k) - \nabla T_s(u) \right] \exptk\  \abs{\psi(z_k)}    dx,$$ 
$$\lim_{k\rightarrow\infty}\itl[\{\abs{u_k}\leq s\}]\axgrad{\nabla T_s(u_k)} \cdot \nabla T_s(u)  \exptk\  \abs{\psi(z_k)}   dx=0,$$ 
and 
$$\lim_{k\rightarrow\infty}\itl[\{\abs{u_k}\leq s\}]  b_1 |s|^m    e^{\mu_0 s}  \abs{\psi(z_k)} dx=0. $$

Thus we get 
\begin{eqnarray}\label{I24low}
\limsup_{k\rightarrow\infty} \itl[\{\abs{u_k}\leq s\}] D_k\ dx&\leq& \limsup_{k\rightarrow\infty}(I_1 -I_4) +\varepsilon \\
&=& \limsup_{k\rightarrow\infty}(-I_2 -I_3+I_5) +\varepsilon\nonumber 
\end{eqnarray}
for any $\varepsilon>0$ provided $j=j(\varepsilon)$ is sufficiently large. Here we use \eqref{First5} in the last equality.

We  next claim that  for any  $j\geq s$ we have 
\begin{equation}\label{I235}
\lim_{k\rightarrow\infty}(-I_2 -I_3+I_5)=0.
\end{equation}
To prove this claim, we treat each term on the right-hand side separately as follows.

\noindent {\bf The term $I_2$:}  Since $u_k \xrightarrow{k} u$ a.e. and  $z_k \xrightarrow{k} 0$ a.e., it holds that  
 $$\axgrad{\nabla T_s (u)} \exptk \psi'(z_k) \xrightarrow{k} \axgrad{\nabla T_s (u)} e^{\mu_0\abs{T_j(u)}} \psi'(0) \quad {\rm a.e.}$$

Thus using the pointwise estimate,
 $$|\axgrad{\nabla T_s(u)} e^{\mu_0 |T_j(u_k)|} \psi'(z_k)| \leq  e^{\mu_0 j} \max_{r \in [-2s,2s]}\abs{\psi'(r)} 
 \Lambda_1|\nabla T_s(u)|^{p-1} $$
 and  Dominated Convergence Theorem, we have 
$$\axgrad{\nabla T_s (u)} \exptk \psi'(z_k) \xrightarrow{k} \axgrad{\nabla T_s (u)} e^{\mu_0\abs{T_j(u)}}\psi'(0)$$  
strongly in $L^{p/(p-1)}(\Om,\RR^n)$.  

Next, since $u_k$ is uniformly bounded in $\wpo$ and $T_s(u_k) \xrightarrow{k} T_s(u)$ a.e., we have $\nabla T_s(u_k) \xrightharpoonup{k} \nabla T_s(u)$ weakly in $L^p(\Om,\RR^n)$. 
On the other hand, since 
\begin{equation}\label{chiconv}
\chi_{\{\abs{u_k} \leq s\}} \xrightarrow{k} \chi_{\{\abs{u} \leq s\}}  {\rm~ a.e. ~in~} \Om\setminus\{\abs{u} = s\} {\rm ~and~} |\nabla T_s(u)| = 0 {\rm~ a.e.~ on~} \{\abs{u} = s\},
\end{equation}
 we  have  from  Dominated Convergence Theorem that 
\begin{equation*}
\nabla T_s(u)\chi_{\{\abs{u_k} \leq s\}} \xrightarrow{k} \nabla T_s(u)\chi_{\{\abs{u} \leq s\}}=\nabla T_s(u) \trm{strongly in} L^p(\Om,\RR^n). 
\end{equation*}

Thus, 
\begin{eqnarray}\label{weaknablaz}
\chi_{\{\abs{u_k} \leq s\}} (\nabla T_s(u_k) - \nabla T_s(u)) &=& \nabla T_s(u_k) - \nabla T_s(u)\chi_{\{\abs{u_k} \leq s\}}\\
 &\xrightharpoonup{k}& 0 \trm{weakly in} L^p(\Om,\RR^n).\nonumber
\end{eqnarray}

These convergences  imply  that $$\lim_{k\rightarrow\infty} I_2 =0.$$

\noindent {\bf The term $I_3$:} By \eqref{cond1},  $|\axgrad{\nabla u_k}|$ is uniformly bounded in $L^{p/(p-1)}(\Om)$. On the other hand, again by \eqref{chiconv} and  Dominated Convergence Theorem  we have $$|\chi_{\{\abs{u_k}>s\}}  (-\nabla T_s(u)) \exptk \psi'(z_k)| \xrightarrow{k} 0 \trm{strongly in} L^p(\Om).$$

Thus using H\"older's inequality we see that $$\lim_{k\rightarrow\infty}I_3=0.$$

\noindent {\bf The term $I_5$:} We have 
\begin{eqnarray}\label{I5term}
I_5&=&\mu_0 \itl F \cdot \exptk \psi(z_k) \nabla T_j(u_k) {\rm sign}(u_k)\ dx\\
&& +\itl F \cdot \exptk \psi'(z_k) \nabla z_k \ dx.\nonumber
\end{eqnarray}

As  $F  \exptk \psi(z_k)  \xrightarrow{k} (0, \dots,0)$ a.e. in  $\Om$, by  Dominated Convergence Theorem we find  
 $$ F \exptk \psi(z_k)   \xrightarrow{k} (0,\dots, 0) \trm{strongly in} L^{p/(p-1)}(\Om,\RR^n).$$
Since $\nabla T_j(u_k) {\rm sign}(u_k)$ is uniformly bounded in $L^p(\Om,\RR^n)$, we then conclude that 
\begin{equation}\label{de1}
\mu_0 \itl F\cdot \exptk \psi(z_k) \nabla T_j(u_k) {\rm sign}(u_k)\ dx \xrightarrow{k} 0.
\end{equation}

Again, by  Dominated Convergence Theorem we have
$$ F \exptk \psi'(z_k) \xrightarrow{k} F \expt[\abs{T_j(u)}] \psi'(0) \trm{strongly in} L^{p/(p-1)}(\Om,\RR^n).$$ 

Thus using \eqref{weaknablaz} and $\nabla z_k= \nabla T_s(u_k) - \nabla T_s(u)$, we obtain that  
$$\itl[\{\abs{u_k}\leq s\}] F \cdot \exptk \psi'(z_k) \nabla z_k \ dx \xrightarrow{k} 0.$$

On the other hand, 
\begin{eqnarray*}
\lefteqn{\itl[\{\abs{u_k}> s\}] F \cdot \exptk \psi'(z_k) \nabla z_k \  dx}\\
&=&\itl F\cdot \exptk \psi'(z_k) (-\nabla T_s(u)) \chi_{\{\abs{u_k}> s\}}\ dx\\
&\xrightarrow{k}& 0,
\end{eqnarray*}
by \eqref{chiconv}, H\"older's inequality,  and  Dominated Convergence Theorem.

Combining the last two limits, we obtain
\begin{equation}\label{de2}
\itl F\cdot \exptk \psi'(z_k) \nabla z_k \ dx \xrightarrow{k} 0.
\end{equation}

Hence combining \eqref{I5term}, \eqref{de1}, and \eqref{de2},   we conclude that 
$$\lim_{k\rightarrow\infty} I_5=0.$$ 

Thus we have shown that the limit \eqref{I235} holds. Then
in view of \eqref{I24low} and the fact that $D_k\geq 0$  we find that 
\begin{equation*}
\itl[\{\abs{u_k}\leq s\}] D_k \ dx \xrightarrow{k} 0.
\end{equation*}

On the other hand, by \eqref{chiconv}, 
\begin{eqnarray*}
\chi_{\{\abs{u_k}> s\}} D_k
&=&\chi_{\{\abs{u_k}> s\}}  [\axgrad{0} - \axgrad{\nabla T_s (u)}]\cdot  (-\nabla T_s(u))\\
&\xrightarrow{k}& 0 \quad \text{a.e.,}
\end{eqnarray*}
which implies that 
$\itl[\{\abs{u_k} > s\}] D_k \  dx \xrightarrow{k} 0$. Thus we obtain
\begin{equation}\label{Brow}
\itl[\Om] D_k \  dx \xrightarrow{k} 0.
\end{equation}

Finally, with \eqref{Brow} we can apply  {\color{black} Lemma \ref{brow_lemma} to} conclude the proof of \eqref{head} as desiblack. 
\end{proof}

\end{document}